\documentclass[11pt,leqno]{amsart}

\usepackage{newtxtext,times,txfonts,dsfont,eucal,graphicx}

\setlength{\parskip}{.2ex}

\usepackage[text={6.5in,8.9in},centering]{geometry}

\usepackage[dvipsnames]{xcolor}   
\usepackage{xparse}
\usepackage{xr-hyper}
\usepackage[linktocpage=true,colorlinks=true,hyperindex,citecolor=RoyalBlue,linkcolor=RoyalBlue]{hyperref}

\newcommand{\inv}{^{\raisebox{.2ex}{$\scriptscriptstyle-1$}}}   
\newcommand{\spec}{\mathcal{P}(L)}
\newcommand{\specz}{\mathcal{P}_z(L)}
\newcommand{\mx}{\mathcal{M}(L)}
\newcommand{\jr}{\mathsf{j}_L}
\newcommand{\z}{\mathcal{Z}(L)}
\DeclareMathOperator{\irr}{\mathcal{I}_z}
\DeclareMathOperator{\irs}{\mathcal{I}_z^+}
\newcommand{\an}{\mathsf{a}(b)}
\DeclareMathOperator{\cz}{\mathrm{c}\;\!\!\ell}

\newtheorem{theorem}[equation]{Theorem}
\newtheorem{proposition}[equation]{Proposition}
\newtheorem{lemma}[equation]{Lemma}
\newtheorem{corollary}[equation]{Corollary}
\theoremstyle{definition}
\newtheorem{cremark}[equation]{Concluding remarks}

\newtheorem{remark}[equation]{Remark}

\numberwithin{equation}{section}

\begin{document}

\author{Themba Dube}

\address{[1] Department of Mathematical Sciences, University of South Africa.
[2] National Institute for Theoretical and Computational Sciences (NITheCS).
}
\email{dubeta@unisa.ac.za}

\author{Amartya Goswami  
}

\address{[1] Department of Mathematics and Applied Mathematics, University of Johannesburg.
[2] National Institute for Theoretical and Computational Sciences (NITheCS).
}

\email{agoswami@uj.ac.za}

\title{On  $z$-elements of multiplicative lattices}

\subjclass{06F07, 06F99, 06B23}



%
\keywords{multiplicative lattice, $z$-element, prime element,  strongly irreducible element}

\maketitle

\begin{abstract}
The aim of this paper is to investigate further properties of $z$-elements in multiplicative lattices. We utilize $z$-closure operators to extend several properties of $z$-ideals to $z$-elements and introduce various distinguished subclasses of $z$-elements, such as $z$-prime, $z$-semiprime, $z$-primary, $z$-irreducible, and $z$-strongly irreducible elements, and study their properties. We provide a characterization of multiplicative lattices where $z$-elements are closed under finite products and  a representation of $z$-elements in terms of $z$-irreducible elements in  $z$-Noetherian multiplicative lattices.
\end{abstract}
  
\section{Introduction}\label{intro}

The purpose of multiplicative lattices, introduced in \cite{WD39} (also see \cite{Dil62}), is to provide an abstract framework for studying the ideal theory of commutative rings with identity. Since then, extensive research has been conducted to extend various types of ideals to corresponding types of elements in multiplicative lattices. It is unsurprising, then, that $z$-ideals of commutative rings are part of this endeavor as well. Although the notion of $z$-ideals (of rings of continuous functions) was introduced by Kohls \cite{Koh57} in the late 1950s, leading to numerous studies (see, e.g., \cite{GJ60, AKA99, AM07, AP20, Mar72, HMW03, Don80, Mas80, Man68, Mas73, HP80-I, HP80-II, Pag81}) and their generalizations (such as \cite{AM13, MB20, DI16, ABN20, BM20, MB22, AAT13, MJ20, XZ09, JK19, Dub16, Dub18, VMS19, Moh14}), to the best of the our knowledge, the extension of $z$-ideals to $z$-elements of multiplicative lattices was only recently undertaken in \cite{MC19}.

Our focus in this paper is twofold. Firstly, we explore further properties of $z$-ideals and extend them to $z$-elements in multiplicative lattices. Secondly, we introduce several subclasses of $z$-elements and study their properties. In both cases, we systematically use certain closure operators. In the context of $z$-ideals of commutative rings, for any ideal $I$, there exists a smallest $z$-ideal $I_z$ (see \cite{Mas73}) containing $I$. Similarly, for an element $i$ in a multiplicative lattice, this smallest $z$-element is denoted by $i_z$ in \cite{MC19}. We refer to these smallest $z$-elements as `$z$-closure operators' because, like any closure operator, they provide an alternative definition for $z$-elements (see Proposition \ref{lclk}(\ref{altd})). Moreover, $z$-closure operators serve the purpose of examining  subclasses of $z$-elements introduced here.

The motivation for this approach arises from a result in \cite{JRT22}, where it was shown that:
\begin{equation}\label{kppk}
k\text{-prime ideal}\Leftrightarrow k\text{-ideal} \;+\;\text{prime ideal,}
\end{equation}
where a $k$-ideal is defined in the sense of \cite{Hen58}, and a $k$-prime ideal is obtained by restricting the definition of a prime ideal to $k$-ideals (see \cite[Definition 3.4(1)]{JRT22}).

Motivated by (\ref{kppk}), we first introduce some distinguished subclasses of $z$-elements of a multiplicative lattice. Using our $z$-closure operators, we establish the following equivalence formulations:
\begin{equation}\label{kxkx}
z \text{-}\star\!\text{ element}\Leftrightarrow z\text{-element}\; + \;\star\; \text{element},
\end{equation}
where `$\star$' stands for irreducible,  strongly irreducible, and, under an additional assumption, for $z$-prime, $z$-semiprime, and $z$-primary elements.

For the sake of completeness, unless it is obvious, every result presented in this paper will be accompanied by either a proof or a reference where the proof can be found. Additionally, when we extend a result to $z$-elements, we shall indicate a source for the  ring-theoretic counterpart of it.

We now briefly describe the content of the paper. In Section \ref{prlm}, we gather the required background on multiplicative lattices, while in Section \ref{zid}, we study some properties of $z$-elements, which includes  a characterization of multiplicative lattices in which $z$-elements are closed under finite products (Theorem \ref{prdz}). In Section \ref{zco}, we study properties of $z$-closure operators and extend several results on $z$-ideals to $z$-elements. We give a characterization of $pz$-multiplicative lattices (Proposition \ref{cpzl}). We also discuss some results (in Section \ref{sonu}) on $z$-closure operators viewed as nuclei of various types. The purpose of Section \ref{sczi} is to introduce a number of subclasses of $z$-elements, namely, $z$-irreducible, $z$-meet irreducible, $z$-prime, $z$-semiprime, and $z$-primary, and prove the equivalence formulation (\ref{kxkx}) for them. We show a representation of any $z$-element in terms of $z$-irreducible elements in a $z$-Noetherian multiplicative lattice (Proposition \ref{nssi}). We prove a couple of results on minimal $z$-prime elements (Proposition \ref{kmkp} and  Proposition \ref{wnknp}). We conclude the paper with a discussion of future work.

\section{Background} \label{prlm}
 
In this section, we shall review key definitions and results on multiplicative lattices that are essential for this paper. For a detailed study of multiplicative lattices, readers may consult \cite{And74, And76, Dil62, Dil36, War37, WD39}. According to \cite{Dil62},  a \emph{multiplicative lattice}\footnote{Various definitions exist in the literature. For instance, according to \cite{Kei72} (also see \cite{FFJ22}), a \emph{multiplicative lattice} is defined as a complete lattice $(L, \leqslant , 0, 1)$ equipped with a multiplication operation on $L$ such that $ab\leqslant a\wedge b$ holds for all $a$, $b\in L$. Another definition can be found in \cite{Ros87}.} is a complete lattice $(L, \leqslant, 0, 1)$ endowed with an associative, commutative multiplication (denoted by $\cdot$), which distributes over arbitrary joins and has $1$ as multiplicative identity. Note that  in the sense of \cite{Mul86},  a multiplicative lattice is a commutative, unital quantale.
For brevity, we shall write $xy$ for $x\cdot y$ and  $x^n$ for $x\cdot   \cdots \cdot   x$ (repeated $n$ times). We say $x$ is \emph{strictly below} $y$, denoted by $x<y$, whenever $x\leqslant y$ and $x\neq y$. 

For our later purposes, in the following lemma, we gather some elementary properties of multiplication of multiplicative lattices.

\begin{lemma}\label{bip}
In a multiplicative lattice $L$, the following hold.
\begin{enumerate}
	
\item\label{pxyx} $xy\leqslant x$, for all $x$, $y\in L.$
	
\item \label{mul}
$x  y\leqslant x\wedge y$, for all $x, y\in L$.
		
\item $x  0=0$, for all $x\in L$.
		
\item\label{mon} If $x\leqslant y$, then $x  z\leqslant y  z,$\, for all $x$, $y$, and $z\in L$.
		
\item\label{monj} If $x\leqslant y$ and $u\leqslant v$, then $x  u\leqslant y v$, for all $x,$ $y,$ $u,$ $v\in L$.
\end{enumerate}
\end{lemma}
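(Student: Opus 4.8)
The plan is to derive everything from the three structural axioms: $1$ is a multiplicative identity, multiplication distributes over arbitrary joins (including the empty join), and multiplication is commutative and associative. I would not prove the items in the order listed; instead I would first dispatch (3), then (1), then the rest in the stated order.

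For (3), observe that $0=\bigvee\varnothing$, so distributivity of multiplication over arbitrary joins, applied to the empty family, gives $x0=x\bigl(\bigvee\varnothing\bigr)=\bigvee\varnothing=0$. For (1), use that $1$ is the top element, so $1\vee y=1$, whence
\[
x=x\cdot 1=x(1\vee y)=x\cdot 1\vee xy=x\vee xy,
\]
which forces $xy\leqslant x$. Item (2) is then immediate: $xy\leqslant x$ by (1), and by commutativity $xy=yx\leqslant y$ by (1) again, so $xy\leqslant x\wedge y$.

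For (4), suppose $x\leqslant y$, so $x\vee y=y$; then distributivity gives $yz=(x\vee y)z=xz\vee yz$, hence $xz\leqslant yz$. Finally, for (5), assume $x\leqslant y$ and $u\leqslant v$. Applying (4) once gives $xu\leqslant yu$, and applying (4) to $u\leqslant v$ together with commutativity gives $yu=uy\leqslant vy=yv$; transitivity of $\leqslant$ yields $xu\leqslant yv$.

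I do not anticipate a genuine obstacle here, as each step is a one-line consequence of an axiom; the only point that requires a moment's care is the use of distributivity over the \emph{empty} join in (3), and the observation that (1) is best obtained by absorbing $xy$ into $x$ via the identity $1=1\vee y$ rather than attempted directly.
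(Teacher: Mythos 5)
Your proof is correct and follows essentially the same route as the paper's: items (1), (2), (4), and (5) are argued identically (absorption via $1=1\vee y$, deducing (2) from (1) by commutativity, absorption via $y=x\vee y$, and chaining (4) twice). The only divergence is (3), where you invoke distributivity over the empty join, whereas the paper deduces $x\cdot 0\leqslant x\wedge 0=0$ from (2); both are valid, the paper's version merely avoiding any reliance on reading ``arbitrary joins'' as including the empty one.
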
 

\begin{proof}
For (1), 	we notice that \[x=x  1=x  (y\vee 1)=(x y)\vee x,\] and that implies $x  y\leqslant x$. The claim (2)  Follows from (\ref{pxyx}). For	
(3), we  apply (\ref{mul}) and obtain $x  0\leqslant x\wedge 0=0.$ Hence $x  0=0.$ Since $x\leqslant y$, we have \[y  z=(x\vee y)  z=(x  z)\vee (y  z),\] which implies that $x  z\leqslant y  z$, and this proves (4). Finally, to have
(5), we observe that $x\leqslant y$ and $u\leqslant v$. Applying (\ref{mon}), we get $x  u\leqslant y  u$ and $y  u\leqslant y  v$, and hence $xu\leqslant yv$. 
\end{proof} 

We shall now recall a few definitions. An element $x$ of a multiplicative lattice $L$ is called \emph{proper} if $x\neq 1$. 
A proper element $p$ of $L$ is called \emph{prime} if $x y\leqslant p$ implies that $x\leqslant p$ or $y\leqslant p$ for all $x$, $y\in L$. We shall denote by $\spec$ the set of prime elements of $L$.   A proper element $m$ of $L$ is said to be \emph{maximal}, if $m \leqslant x$ and $x< 1$ implies that $m = x$ for all $x\in L$.   We shall denote by $\mx$ the set of all maximal elements of $L$. An element $c$ of $L$ is called \emph{compact} if for any $\{x_{\lambda}\}_{\lambda \in \Lambda}\subseteq L$ and $c\leqslant \bigvee_{\lambda \in \Lambda} x_{\lambda}$ implies that $c\leqslant \bigvee_{i=1}^n x_{\lambda_i}$, for some $n\in \mathds{N}^+$, and  $L$ is said to be \emph{compactly generated} if every element of
$L$ is the join of compact elements of $L$.  

The following well-known results (see, e.g., \cite[Lemma 2.2]{JS86} for (\ref{exml})) are going to play an important role in this paper. Although the proofs are easy, we include them here for the sake of completeness.

\begin{lemma}\label{flm}
For a multiplicative lattice $L$, the following hold.
\begin{enumerate}
\item\label{mxip} If $m\in \mx$, then $m\in \spec$.
	
\item\label{exml} If the top element $1$ of $L$ is compact, then for every proper element $a$ of $L$, there exists an $m\in \mx$ such that $a\leqslant m$.
\end{enumerate}
\end{lemma}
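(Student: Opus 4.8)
The plan is to prove the two parts separately, each by a short direct argument. For part (\ref{mxip}), I would take $m \in \mx$ and show it is prime. Since $m$ is maximal it is in particular proper, so it remains to verify the primeness condition. Suppose $xy \leqslant m$ but $x \not\leqslant m$; I must deduce $y \leqslant m$. Consider the join $m \vee x$. Since $x \not\leqslant m$ we have $m < m \vee x$, and since $m$ is maximal this forces $m \vee x = 1$. Now multiply by $y$ and use distributivity of multiplication over joins: $y = y \cdot 1 = y(m \vee x) = ym \vee yx$. By Lemma \ref{bip}(\ref{pxyx}) we have $ym \leqslant m$, and by hypothesis $yx = xy \leqslant m$, so $y = ym \vee yx \leqslant m$, as required. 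Hence $m \in \spec$.

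For part (\ref{exml}), I would use a standard Zorn's lemma argument exploiting compactness of $1$. Let $a$ be a proper element and let $\mathcal{F} = \{x \in L : a \leqslant x < 1\}$, partially ordered by $\leqslant$. It is nonempty since $a \in \mathcal{F}$. Given a chain $\mathcal{C}$ in $\mathcal{F}$, its join $b = \bigvee \mathcal{C}$ certainly satisfies $a \leqslant b$; the point is that $b \neq 1$. If $b = 1$, then since $1$ is compact and $1 = \bigvee \mathcal{C}$, there would be finitely many elements $x_1, \dots, x_n \in \mathcal{C}$ with $1 = x_1 \vee \cdots \vee x_n$; but a chain is directed, so one of them, say $x_j$, dominates the others, whence $x_j = 1$, contradicting $x_j < 1$. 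Thus $b \in \mathcal{F}$ is an upper bound for $\mathcal{C}$ in $\mathcal{F}$. By Zorn's lemma $\mathcal{F}$ has a maximal element $m$; by construction $a \leqslant m$ and $m < 1$, and maximality in $\mathcal{F}$ together with $m < 1$ gives exactly the defining property of a maximal element of $L$, so $m \in \mx$.

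The only genuinely delicate point is the verification that the join of a chain in $\mathcal{F}$ stays below $1$, which is precisely where compactness of the top element enters; everything else is routine manipulation with the lattice operations and Lemma \ref{bip}. I would present part (\ref{mxip}) first since it is self-contained, then part (\ref{exml}).
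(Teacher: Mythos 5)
Your argument is correct and follows essentially the same route as the paper's own proof: part (1) via $m\vee x=1$ and distributivity of multiplication over the join, and part (2) via Zorn's lemma on $\{x\in L\mid a\leqslant x<1\}$, with compactness of $1$ used exactly to show the join of a chain stays proper. No substantive differences.
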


\begin{proof}
(1). Let $m\in \mx$ and let  $x$, $y\in L$ such that $xy\leqslant m$. If $x\nleqslant m$, then maximality of $m$ yields $m\vee x=1$. Therefore, \[y=y1=y(m\vee x)=ym\vee yx\leqslant m\vee m=m,\]  showing $m\in \spec$.

(2). We consider  a  subset
\[S =\{ x\in L\mid a\leqslant x< 1\}\]
of $L$.
Then $S\neq \emptyset$ as $a\in S.$ We consider $S$ as a poset, with the partial order inherited from $L$. Let $C\subseteq S$
be a chain. Take $c=\bigvee C$. We shall show that $c$ is an upper bound for $C$ and $c\in C$. That $c$
is indeed an upper bound for $C$ is immediate. To show that $c\in C$, we need only to show that
$c\neq 1$. Suppose, by way of contradiction, that $c=1$. Since $1$ is compact, there are finitely
many  $c_1, \ldots, c_n\in C$ such that \[c_1\vee \cdots \vee c_n=1.\] Since $C$ is a chain, there is an
index $k\in \{1, \ldots, n\}$  such that $c_k=1$; which is false because $1\notin C$. So, by Zorn’s Lemma, $S$
has a maximal element, say $m$. To be done, we must show that $m\in \mx$.
We consider any $n\in L$ with $m\leqslant n< 1$. Then $n\in C$, whence $m=n$, because $m$ is a maximal
element in $C$.
\end{proof}

\section{$z$-elements}
\label{zid}

First, let us list the assumptions that we will impose on our multiplicative lattices. We shall work with  compactly generated multiplicative lattices in which 1 is compact and every finite
product of compact elements is a compact element.

From Lemma \ref{flm}(\ref{exml}) it follows  that $\mx\neq \emptyset$. Therefore, the notion of  $z$-ideals of  (commutative) rings can be extended to  multiplicative lattices.
For an $a\in L$, let us consider \[\mathcal{M}_a^L=\left\{ m\in \mx \mid a\leqslant m\right\}\quad \text{and} \quad \mathsf{m}_a= {\bigwedge}\mathcal{M}_a^L.\]
An element $x$ of  $L$ is called a \emph{z-element} if for all $a$, $b\in L$ with $\mathcal{M}^L_a=\mathcal{M}^L_b$ and $b\leqslant x $ implies that $a\leqslant x$. We shall denote by $\z$ the set of all $z$-elements of  $L$.  When there is no ambiguity concerning the underlying multiplicative lattice, we shall write $\mathcal{M}_a$ for $\mathcal{M}_a^L$.

\begin{remark}
It can be shown (see \cite[Lemma 2.10]{MC19}) that  an element $x$ is a $z$-element if and only if $\mathcal{M}^L_a\supseteq \mathcal{M}^L_b$ and $b\leqslant x $ implies that $a\leqslant x$. 	
\end{remark}

It turns out that $z$-elements abound in multiplicative lattices. Here, we present some examples of these elements, which we will verify later. Moreover, we will explain related terminologies as they arise.

\begin{enumerate}
\item [$\bullet$]
The top element $1$ of $L$ is a $z$-element (Proposition \ref{lclk}(\ref{ckr})).

\item [$\bullet$]
Every maximal element of $L$ is a $z$-element (Lemma \ref{epzi}(\ref{meze})).

\item [$\bullet$]
The Jacobson radical $\jr$ is a $z$-element (Lemma \ref{epzi}(\ref{jrze})).

\item [$\bullet$]
If $a$ is a $z$-element, so is the residual $(a:b)$ (Proposition \ref{icj}).

\item [$\bullet$]
If $L$ is semisimple, then annihilator $\an$ of $x$ is a $z$-element (Corollary \ref{anze}).
\end{enumerate}

In the next lemma, we compile some of the elementary properties of $z$-elements of multiplicative lattices that will be utilized in the sequel. Let us recall the following definitions that are necessary to state the lemma. The \emph{Jacobson radical} of a multiplicative lattice $L$ is defined by \[\jr=\bigwedge\left\{m\mid m\in \mx \right\},\]  and a prime element $p$ of  $L$ is said to be \emph{minimal} if there is no 
$q\in \spec$  such that $q < p.$

\begin{lemma}\label{epzi}
In a multiplicative lattice $L$, $z$-elements have the following properties.
\begin{enumerate}
		
\item\label{ijam} If $\{x_{i}\}_{i\in I}\subseteq \z$, then  $\bigwedge_{i\in I} x_i\in \z$.

\item\label{alze} $x\in \z$  if and only if $\mathsf{m}_x= x$. 		

\item\label{meze} If $m\in \mx$, then $m\in \z$.
		
\item If $L$ has a unique maximal element m, then any element $a < m$ is not a $z$-element.

\item\label{jrze}  $\jr\in \z$. 
		
\item\label{mpz} If $p$ is minimal in the class of prime elements above a $z$-element $x$, then $p\in \z$.

\item \label{ziz} If $\jr=0$, then $0\in \z$.
\end{enumerate}
\end{lemma}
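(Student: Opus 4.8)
The plan is to prove the seven items more-or-less in order, using the two equivalent descriptions of $z$-elements: the original definition (if $\mathcal{M}_a = \mathcal{M}_b$ and $b \leqslant x$, then $a \leqslant x$) and the characterization in item (\ref{alze}) that $x \in \z$ iff $\mathsf{m}_x = x$. Note that $\mathsf{m}_x = \bigwedge \mathcal{M}_x \geqslant x$ always holds (every $m$ with $x \leqslant m$ satisfies $x \leqslant m$), so item (\ref{alze}) really says $x$ is a $z$-element iff $\mathsf{m}_x \leqslant x$.

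First I would prove (\ref{ijam}): suppose $\mathcal{M}_a = \mathcal{M}_b$ and $b \leqslant \bigwedge_{i \in I} x_i$. Then $b \leqslant x_i$ for each $i$, so by hypothesis $a \leqslant x_i$ for each $i$, whence $a \leqslant \bigwedge_i x_i$. Next, for (\ref{alze}), one direction is immediate from $\mathcal{M}_{\mathsf{m}_x} = \mathcal{M}_x$ (any maximal element above $x$ is above $\mathsf{m}_x$ since $\mathsf{m}_x$ is the meet of exactly those, and conversely) together with $x \leqslant \mathsf{m}_x$: if $x$ is a $z$-element, applying the definition with $a = \mathsf{m}_x$, $b = x$ gives $\mathsf{m}_x \leqslant x$. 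Conversely, if $\mathsf{m}_x = x$ and $\mathcal{M}_a = \mathcal{M}_b$ with $b \leqslant x$, then $\mathcal{M}_b \subseteq \mathcal{M}_x$ forces $\mathsf{m}_x \leqslant \mathsf{m}_b$, and $\mathcal{M}_a = \mathcal{M}_b$ gives $a \leqslant \mathsf{m}_a = \mathsf{m}_b = \mathsf{m}_x = x$; I should double-check the reindexing here, which is the one slightly fiddly point. Item (\ref{meze}) is then trivial: if $m \in \mx$ then $m \in \mathcal{M}_m$, so $\mathsf{m}_m = m$, and (\ref{alze}) applies. For (\ref{jrze}), note $\jr = \bigwedge \mx$ is a meet of maximal elements, each a $z$-element by (\ref{meze}), so (\ref{ijam}) finishes it (or directly: $\mathsf{m}_{\jr} = \jr$ since $\mathcal{M}_{\jr} = \mx$).

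For item (4), suppose $L$ has a unique maximal element $m$ and $a < m$. Then $\mathcal{M}_a = \{m\} = \mathcal{M}_m$, but $m \nleqslant a$, so the $z$-element condition fails with the pair $(m, m)$... more precisely, take $b = m$: we have $\mathcal{M}_m = \mathcal{M}_a$ and $m \leqslant m$ but $m \nleqslant a$ — wait, I need $b \leqslant a$; instead observe $\mathsf{m}_a = m \neq a$, so $a$ is not a $z$-element by (\ref{alze}). Item (\ref{ziz}) is the special case: if $\jr = 0$ then $\mathsf{m}_0 = \jr = 0$, so $0 \in \z$ by (\ref{alze}). The one genuinely substantive item is (\ref{mpz}): let $x \in \z$ and let $p$ be minimal among primes above $x$. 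I would show $\mathsf{m}_p \leqslant p$. Since $p$ is prime, using the characterization of primes and a standard localization/Zorn's-lemma argument — analogous to the proof that a minimal prime over an ideal is a $z$-ideal in ring theory — one shows that every maximal element $m$ above $p$ already "sees" $x$ in a way that traps $\mathsf{m}_p$ below $p$; concretely, I expect to argue that $\bigwedge\{m \in \mx : p \leqslant m\}$ cannot strictly exceed $p$ because $x \leqslant p$ and $p$ is a minimal prime over $x$, invoking that the meet of maximal elements above $p$ contains no prime strictly below $p$. This is the step I expect to be the main obstacle, since it requires more than formal manipulation — likely a minimal-prime characterization (e.g. $p$ minimal over $x$ iff for every compact $c \leqslant p$ there is a compact $s \nleqslant p$ with $cs \leqslant x$, or some $z$-analogue thereof) that may itself need to be set up; if such a characterization is available earlier or can be cited, the argument closes quickly, otherwise it deserves its own short lemma.
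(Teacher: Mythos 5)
Your treatment of items (1), (3), (4), (5) and (7) is correct, and is in fact more self-contained than the paper's, which simply cites \cite{MC19} (Lemmas 2.2, 2.3, 3.9, 3.4) for items (3)--(6); your direct arguments via $\mathsf{m}_m=m$ for maximal $m$, $\mathcal{M}_a=\{m\}$ in the quasi-local case, and $\jr=\bigwedge\mx$ combined with (1) and (3) all go through. For item (2) your forward direction (apply the defining implication to the pair $a=\mathsf{m}_x$, $b=x$, using $\mathcal{M}_{\mathsf{m}_x}=\mathcal{M}_x$) is a clean replacement for the paper's citation of \cite[Lemma 2.10]{MC19}. In the converse, however, you have the inclusion backwards: $b\leqslant x$ gives $\mathcal{M}_x\subseteq\mathcal{M}_b$, hence $\mathsf{m}_b\leqslant\mathsf{m}_x$ (not $\mathsf{m}_b=\mathsf{m}_x$). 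The chain should read $a\leqslant\mathsf{m}_a=\mathsf{m}_b\leqslant\mathsf{m}_x=x$, which is exactly how the paper argues; the conclusion survives, so this is the minor repair you yourself flagged.

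The genuine gap is item (\ref{mpz}), the only substantive claim in the lemma, for which you give a plan rather than a proof. The paper outsources it to \cite[Lemma 3.4]{MC19}, so you are not alone in omitting the argument, but as written your proposal does not close it. The missing ingredient is precisely the minimal-prime characterization you conjecture: under the standing hypotheses (compactly generated, finite products of compacts compact), $p$ is minimal over $x$ iff for every compact $c\leqslant p$ there is a compact $d\nleqslant p$ and $n\in\mathds{N}^+$ with $(cd)^n\leqslant x$. Granting that, the ring-theoretic proof of \cite[Theorem 1.1]{Mas73} transfers: given $\mathcal{M}_a\supseteq\mathcal{M}_b$ with $b\leqslant p$, one passes to compact $c\leqslant b$, picks $d\nleqslant p$ with $(cd)^n\leqslant x$, uses the multiplicativity of $a\mapsto\mathcal{M}_a$ (\cite[Lemma 2.6(2)]{MC19}) and the fact that $x$ is a $z$-element to pull the corresponding product below $x\leqslant p$, and then primeness of $p$ together with $d\nleqslant p$ forces the compact part of $a$ below $p$. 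Your vaguer suggestion that ``every maximal element above $p$ traps $\mathsf{m}_p$ below $p$'' is not the right mechanism --- showing $\mathsf{m}_p\leqslant p$ directly is not obviously easier than the definition --- so without the characterization lemma this item remains unproved.
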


\begin{proof}
 
(1). Suppose $a$, $b\in L$,  $\mathcal{M}_a\supseteq \mathcal{M}_b$, and $b\leqslant \bigwedge_{i\in I}x_i$. Since $\bigwedge_{i\in I}x_i\leqslant x_i$ and for all $i\in I$, $x_i\in \z$,  we have $a\leqslant x_i$ for all $i\in I$, and hence, $a\leqslant \bigwedge_{i\in I}x_i,$ implying that $\bigwedge_{i\in I}x_i\in \z.$

(2). Suppose  $\mathsf{m}_x =x$ for some $x\in L$. Let $\mathcal{M}_a\supseteq\mathcal{M}_b$ and $b\leqslant x, $ for some $a$, $b\in L$. Since $\mathcal{M}_a\supseteq \mathcal{M}_b$, we have $a\leqslant m$, for all $m\in \mathcal{M}_b$.  Since $b\leqslant x=\bigwedge\left\{m\in \mathcal{M}(L)\mid x\leqslant m\right\}$, this implies that  \[a\leqslant \mathsf{m}_b\leqslant \mathsf{m}_x= x,\] and hence $x\in \z$. For the converse, suppose  $x\in \z.$ Then from \cite[Lemma 2.10]{MC19} it follows that $\mathsf{m}_x\leqslant x$, whereas $x\leqslant\mathsf{m}_x$ follows from the definition of $\mathsf{m}_x$.

(3)--(6). See Lemmas 2.2, 2.3, 3.9, and 3.4 respectively in  \cite{MC19}.

(7). Follows from (\ref{ijam}) and (\ref{jrze}). 
\end{proof}

\begin{remark}
Among the results in Lemma \ref{epzi}, (\ref{mpz}) extends \cite[Theorem 1.1]{Mas73}, whereas (\ref{alze}) gives an alternative definition of a $z$-element. We shall freely use any one of the three equivalent definitions of a $z$-element throughout the remainder of the paper.
\end{remark}

We shall now construct  examples of $z$-elements of multiplicative lattices.  Suppose that $L$ is a multiplicative lattice. For any $a,$ $b\in L$,  the \emph{residual} of $a$ by $b$ is defined by \[(a:b)=\bigvee\left\{ l\in L\mid lb\leqslant a\right\}.\] 
The following proposition serves as an extension of \cite[Proposition 1.3]{Mas73}.

\begin{proposition}\label{icj}
Suppose $L$ is a multiplicative lattice. If $a\in \z$ and $b\in L$, then $(a:b)\in \z$.
\end{proposition}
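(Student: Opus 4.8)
The plan is to use the characterization from Lemma~\ref{epzi}(\ref{alze}): an element $x$ is a $z$-element if and only if $\mathsf{m}_x = x$. So it suffices to show that $\mathsf{m}_{(a:b)} = (a:b)$, and since $(a:b) \leqslant \mathsf{m}_{(a:b)}$ always holds by definition of $\mathsf{m}$, the real work is to prove $\mathsf{m}_{(a:b)} \leqslant (a:b)$, i.e. that $\bigwedge\{m \in \mx \mid (a:b) \leqslant m\} \leqslant (a:b)$.

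First I would unwind what $\mathsf{m}_{(a:b)} \leqslant (a:b)$ means via the residual: by the definition $(a:b) = \bigvee\{l \mid lb \leqslant a\}$, and since multiplication distributes over arbitrary joins, one checks that $l \leqslant (a:b)$ if and only if $lb \leqslant a$; in particular $(a:b)\,b \leqslant a$. Thus it is enough to show $\mathsf{m}_{(a:b)} \cdot b \leqslant a$. Now the strategy is to relate the maximal elements above $(a:b)$ to maximal elements above $a$. The key claim will be: if $m \in \mx$ and $a \leqslant m$, then either $b \leqslant m$ or $(a:b) \leqslant m$. This follows because $m$ is prime (Lemma~\ref{flm}(\ref{mxip})): we have $(a:b)\,b \leqslant a \leqslant m$, so primeness gives $(a:b) \leqslant m$ or $b \leqslant m$.

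With that claim in hand, I would argue as follows. Let $t = \mathsf{m}_{(a:b)} = \bigwedge\{m \in \mx \mid (a:b) \leqslant m\}$. I want $tb \leqslant a$, and since $a \in \z$ with $\mathsf{m}_a = a$, it suffices to show $tb \leqslant m$ for every $m \in \mx$ with $a \leqslant m$. Fix such an $m$. By the key claim, either $b \leqslant m$, in which case $tb \leqslant b \leqslant m$ by Lemma~\ref{bip}(\ref{pxyx}); or $(a:b) \leqslant m$, in which case $m$ is one of the maximal elements in the meet defining $t$, so $t \leqslant m$ and hence $tb \leqslant t \leqslant m$ again by Lemma~\ref{bip}(\ref{pxyx}). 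Either way $tb \leqslant m$. Taking the meet over all such $m$ gives $tb \leqslant \mathsf{m}_a = a$ (using that $a$ is a $z$-element), hence $t \leqslant (a:b)$, which together with the reverse inequality yields $\mathsf{m}_{(a:b)} = (a:b)$, so $(a:b) \in \z$.

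The main obstacle, such as it is, is making sure the bookkeeping with $\mathsf{m}$ is airtight — specifically that we are entitled to conclude $tb \leqslant a$ from $tb \leqslant m$ for all $m \supseteq a$ in $\mx$; this is exactly the hypothesis that $a$ is a $z$-element rewritten as $\mathsf{m}_a = a$, so it is really just an application of Lemma~\ref{epzi}(\ref{alze}). One should also handle the degenerate cases cleanly: if $(a:b) = 1$ there is nothing to prove, and if $\mathcal{M}^L_{(a:b)} = \emptyset$ then $\mathsf{m}_{(a:b)}$ is the empty meet, namely $1$, forcing $(a:b) = 1$ as well; alternatively one observes $a \leqslant (a:b)$ so $\mathcal{M}^L_{(a:b)} \subseteq \mathcal{M}^L_a$ and the argument above goes through uniformly. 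No step requires more than the primeness of maximal elements and the distributivity of multiplication over joins, both already available.
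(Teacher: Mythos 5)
Your proof is correct, but it takes a different route from the paper's. The paper verifies the defining implication for $z$-elements directly: it takes $\mathcal{M}_x\supseteq\mathcal{M}_y$ with $y\leqslant (a:b)$, notes $yb\leqslant (a:b)b\leqslant a$, invokes a cited lemma of Manjarekar--Chavan (that $\mathcal{M}_x\supseteq\mathcal{M}_y$ implies $\mathcal{M}_{xb}\supseteq\mathcal{M}_{yb}$), and concludes $xb\leqslant a$ from $a\in\z$, hence $x\leqslant (a:b)$ --- a four-line argument that leans on an external reference. You instead work with the fixed-point characterization $x\in\z\Leftrightarrow \mathsf{m}_x=x$ and reduce everything to the inequality $\mathsf{m}_{(a:b)}\cdot b\leqslant \mathsf{m}_a=a$, which you establish by a case split at each maximal element $m\geqslant a$ using primeness of maximal elements (Lemma \ref{flm}(\ref{mxip})). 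The mathematical engine is the same in both proofs --- primeness of maximal elements together with $(a:b)b\leqslant a$ is exactly what underlies the cited lemma $\mathcal{M}_{xb}\supseteq\mathcal{M}_{yb}$ --- but your version makes that mechanism explicit and is self-contained, at the cost of more bookkeeping. Your handling of the degenerate case is also fine: if $(a:b)<1$ then Lemma \ref{flm}(\ref{exml}) guarantees $\mathcal{M}_{(a:b)}\neq\emptyset$, and if $(a:b)=1$ the conclusion is trivial since $1\in\z$. One small point worth stating rather than leaving implicit: the equivalence $l\leqslant (a:b)\Leftrightarrow lb\leqslant a$ does require distributivity of the product over arbitrary joins, as you note, and this is part of the standing definition of a multiplicative lattice, so no extra hypothesis is needed.
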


\begin{proof}
Let $\mathcal{M}_x\supseteq \mathcal{M}_y$ and $y\leqslant (a : b).$ This implies that \[yb\leqslant (a:b)b\leqslant a.\] Since $\mathcal{M}_x\supseteq \mathcal{M}_y$, applying \cite[Lemma 2.6(2)]{MC19}, we have $\mathcal{M}_{xb}\supseteq \mathcal{M}_{yb}$. Since $a\in \z$, we have $xb\leqslant a$. Hence $x\leqslant (a : b).$
\end{proof}

\begin{corollary}
Let $L$ be a multiplicative lattice. If   $b$, $\{b_{i}\}_{i \in I}$, $c\in L$  and   $a$, $\{a_{j}\}_{j \in J}\in \z$, then    $((a : b) : c),$ $(a : b c),$ $((a : c) : b),$ $ (\bigwedge_{j\in J} a_{j} : b),$ $ \bigwedge_{j\in J}(a_{j}: b),$ $ (a : \bigvee_{i\in I}b_{i}),$ and $ \bigwedge_{i\in I}(a : b_{i})$ are $z$-elements of $L$.
\end{corollary}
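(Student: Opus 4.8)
The plan is to derive every one of the seven claimed $z$-elements from Proposition~\ref{icj} together with Lemma~\ref{epzi}(\ref{ijam}) (closure of $z$-elements under arbitrary meets), and from elementary identities for residuals. First I would record the two residual identities that do the real work:
\begin{equation*}
((a:b):c)=(a:bc)=(a:cb)=((a:c):b),
\end{equation*}
which hold in any multiplicative lattice; indeed $l\leqslant((a:b):c)$ iff $lc\leqslant(a:b)$ iff $(lc)b\leqslant a$ iff $l(bc)\leqslant a$ iff $l\leqslant(a:bc)$, using associativity and commutativity of the product, and this immediately yields all four of the listed expressions $((a:b):c)$, $(a:bc)$, and $((a:c):b)$ as the single $z$-element $(a:bc)$, which is a $z$-element by applying Proposition~\ref{icj} first to $a\in\z$ and then noting $(a:bc)=((a:b):c)$ with $(a:b)\in\z$ by Proposition~\ref{icj}.

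Next I would handle the meet-over-$J$ cases. The identity $(\bigwedge_{j\in J}a_j:b)=\bigwedge_{j\in J}(a_j:b)$ holds in any complete lattice with a product distributing over joins: $l\leqslant(\bigwedge_j a_j:b)$ iff $lb\leqslant\bigwedge_j a_j$ iff $lb\leqslant a_j$ for all $j$ iff $l\leqslant(a_j:b)$ for all $j$ iff $l\leqslant\bigwedge_j(a_j:b)$. Since each $a_j\in\z$, Proposition~\ref{icj} gives $(a_j:b)\in\z$, and then Lemma~\ref{epzi}(\ref{ijam}) gives $\bigwedge_{j\in J}(a_j:b)\in\z$; by the identity this is the same element as $(\bigwedge_{j\in J}a_j:b)$, so both are $z$-elements. (Alternatively one notes $\bigwedge_j a_j\in\z$ by Lemma~\ref{epzi}(\ref{ijam}) and applies Proposition~\ref{icj} directly.)

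For the meet-over-$I$ cases I would use the dual identity $(a:\bigvee_{i\in I}b_i)=\bigwedge_{i\in I}(a:b_i)$: since the product distributes over arbitrary joins, $l(\bigvee_i b_i)=\bigvee_i(lb_i)\leqslant a$ iff $lb_i\leqslant a$ for every $i$, which gives the identity exactly as above. Then $(a:b_i)\in\z$ for each $i$ by Proposition~\ref{icj} (here $a\in\z$, taking any fixed index in $J$, or simply $a$ denotes a generic element of $\z$), and Lemma~\ref{epzi}(\ref{ijam}) yields $\bigwedge_{i\in I}(a:b_i)\in\z$, hence $(a:\bigvee_{i\in I}b_i)\in\z$ as well.

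There is no real obstacle here: the corollary is a bookkeeping consequence of Proposition~\ref{icj}, the meet-closure Lemma~\ref{epzi}(\ref{ijam}), and the standard adjunction-style identities for residuals. The only point requiring a modicum of care is making sure the residual identities are stated for a general multiplicative lattice (they use only associativity, commutativity, and distributivity of $\cdot$ over joins, all of which hold by hypothesis), and checking that the expressions as written in the statement genuinely coincide with the ones whose $z$-ness we have established. I would therefore present the proof as: (i) list the residual identities with one-line verifications; (ii) invoke Proposition~\ref{icj} and Lemma~\ref{epzi}(\ref{ijam}); (iii) read off each of the seven expressions.
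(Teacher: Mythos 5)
Your proof is correct and follows exactly the route the paper intends: the corollary is stated without proof as an immediate consequence of Proposition~\ref{icj}, the meet-closure property in Lemma~\ref{epzi}(\ref{ijam}), and the standard residual identities, all of which you verify carefully. (The only blemish is the slip ``all four'' where you list three expressions; it does not affect the argument.)
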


When $L$ is semisimple (that is, $\jr=0$), an annihilator is also an example of a $z$-element. Recall that  the \emph{annihilator} of an element $b$ of $L$ is defined by \[\an=(0:b)=\bigvee\left\{ l\in L\mid lb=0\right\}.\]

\begin{corollary}\label{anze}
If $L$ is a semisimple multiplicative lattice, then $\an\in \z$, for all $b\in L$.
\end{corollary}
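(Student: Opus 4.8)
The plan is to obtain this as an immediate consequence of Proposition \ref{icj} together with Lemma \ref{epzi}(\ref{ziz}). First I would unwind the word ``semisimple'': by definition it means exactly that $\jr = 0$, so the hypothesis of Lemma \ref{epzi}(\ref{ziz}) is met, and we conclude that $0 \in \z$.

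Having $0 \in \z$ in hand, I would then apply Proposition \ref{icj} with the $z$-element taken to be $0$ and the second argument taken to be $b$: since $0 \in \z$ and $b \in L$, the residual $(0 : b)$ lies in $\z$. Because $\an$ is by definition precisely $(0 : b)$, as recalled just above the statement, this gives $\an \in \z$, which is the assertion.

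I do not expect any real obstacle here, as the whole argument reduces to verifying that the hypotheses of the two cited results hold verbatim. The only point requiring (minimal) care is the bookkeeping: one must use the stated definition $\an = (0:b)$ so that Proposition \ref{icj} applies directly, rather than arguing afresh about the join $\bigvee\{\, l\in L \mid lb = 0 \,\}$.
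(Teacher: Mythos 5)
Your argument is correct and is exactly the paper's proof: the paper also derives the corollary by combining Lemma \ref{epzi}(\ref{ziz}) (semisimplicity gives $0\in\z$) with Proposition \ref{icj} applied to $(0:b)=\an$. You have merely spelled out the same two-step deduction in more detail.
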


\begin{proof}
The claim follows from Lemma \ref{epzi}(\ref{ziz}) and Proposition \ref{icj}.
\end{proof}

From Lemma \ref{flm}(\ref{mxip}), we know that every maximal element of a multiplicative lattice is prime. The following proposition gives a sufficient condition for the converse to hold, and moreover, it  extends \cite[Lemma 2.13]{AM22}. To state the result, we first require a few definitions.
Suppose that $L$ is a multiplicative lattice. We recall that if $L$ has finitely many maximal elements, then $L$ is called \emph{quasi-local}.  Following the ring-theoretic version (see \cite[Definition 2.10]{AM22}), we say $L$ is a \emph{$pz$-multiplicative lattice} if every prime element of $L$ is a $z$-element.

\begin{proposition}
If $L$ is a quasi-local $pz$-multiplicative lattice, then every prime element is maximal.
\end{proposition}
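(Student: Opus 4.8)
The plan is to show that every prime element of a quasi-local $pz$-multiplicative lattice is maximal by first reducing to the case of a \emph{minimal} prime below a fixed maximal element, and then exploiting the $pz$-hypothesis together with Lemma \ref{epzi}(4). Since $L$ is quasi-local, write $\mx=\{m_1,\dots,m_n\}$, a finite set. Let $p$ be a prime element; by Lemma \ref{flm}(2) there is some $m_i$ with $p\leqslant m_i$. If I can show $p=m_i$ we are done, so suppose toward a contradiction that $p<m_i$ for whichever maximal element dominates $p$.

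Next I would isolate the key tension. On one hand, the $pz$-hypothesis says $p\in\z$, i.e. $p$ is a $z$-element. On the other hand, Lemma \ref{epzi}(4) tells us that if $L$ has a \emph{unique} maximal element $m$, then no element strictly below $m$ can be a $z$-element. So the strategy is to manufacture, from the quasi-local situation, a sub-situation that looks "uniquely maximal" relative to $p$. The natural device is to pass to the localization-type quotient at $m_i$, or more elementarily, to observe that among $m_1,\dots,m_n$ only $m_i$ lies above $p$ — but that isn't automatic, since several maximal elements could sit above $p$. So first I would argue that we may assume $p$ lies below exactly one maximal element: indeed, $\mathcal{M}_p^L$ is a nonempty finite subset of $\mx$, and $\mathsf{m}_p=\bigwedge\mathcal{M}_p^L$; since $p\in\z$, Lemma \ref{epzi}(2) gives $\mathsf{m}_p=p$. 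If $\mathcal{M}_p^L$ contained two distinct maximal elements $m,m'$, then by primeness of $p$ applied to $mm'\leqslant m\wedge m'$ (Lemma \ref{bip}(2)) one gets $m\leqslant p$ or $m'\leqslant p$, forcing $m=p$ or $m'=p$, i.e. $p$ itself is maximal and we are done. Hence we may assume $\mathcal{M}_p^L=\{m_i\}$, and therefore $p=\mathsf{m}_p=m_i$, contradicting $p<m_i$.

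Actually this last computation already closes the argument cleanly, so I would streamline: assume $p$ prime, pick $m\in\mx$ with $p\leqslant m$ (Lemma \ref{flm}(2)); using $p\in\z$ and Lemma \ref{epzi}(2), $p=\bigwedge\{m'\in\mx\mid p\leqslant m'\}$; if this meet ranges over at least two elements, primeness kills the meet down to one of them and $p$ is maximal; if it ranges over exactly one, that one is $m$ and $p=m$. Either way $p\in\mx$.

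The main obstacle I anticipate is making sure the "two maximal elements above $p$" reduction is airtight — specifically, that one genuinely uses primeness to convert $\bigwedge\{m,m'\}$ being $\leqslant p$ (which holds because $p$ is the meet of all maximal elements above it, hence $\leqslant$ each) into $m\leqslant p$ or $m'\leqslant p$ via $mm'\leqslant m\wedge m'$ (Lemma \ref{bip}(2)) — and then invoking maximality of $m$ (resp.\ $m'$) and $p\neq 1$ to get equality. One should double-check the degenerate case where $\mx$ is a singleton: then quasi-local means unique maximal element, and the claim is immediate from the definitions (or from Lemma \ref{epzi}(4), which outright forbids a non-maximal $z$-element below $m$). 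So the finiteness of $\mx$ is used only to guarantee the meet $\mathsf{m}_p$ is a \emph{finite} meet, and in fact even that is not strictly needed for this argument — the proof works as soon as $p$ sits below at least one maximal element, which is exactly Lemma \ref{flm}(2). I would present it in the short form above.
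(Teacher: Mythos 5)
Your core argument is essentially the paper's proof: the paper also uses the $pz$-hypothesis to get $p\in\z$, then Lemma \ref{epzi}(2) to write $p$ as the meet $\mathsf{m}_p$ of the (finitely many, by quasi-locality) maximal elements above it, notes that the product of all maximal elements lies below $\jr\leqslant\mathsf{m}_p\leqslant p$, and applies primeness to extract a single $m_j\leqslant p$, whence $m_j=p$ by maximality. Two points need correction, though. First, your two-element reduction as literally stated has a gap: if $\mathcal{M}_p^L$ contains $m,m'$ \emph{and possibly more} maximal elements, then $p=\bigwedge\mathcal{M}_p^L\leqslant m\wedge m'$, so you cannot conclude $mm'\leqslant p$ from Lemma \ref{bip}(2) alone. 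You must take the product of \emph{all} elements of $\mathcal{M}_p^L$ (or of all of $\mx$, as the paper does), which lies below the full meet $=p$, and then apply primeness finitely many times; your streamlined version implicitly does this, but the displayed two-element step should be replaced by the finite-product induction. Second, your concluding remark that finiteness of $\mx$ ``is not strictly needed'' is false: primeness only propagates through \emph{finite} products, so from an infinite meet $\bigwedge_{i}m_i=p$ one cannot extract a single $m_i\leqslant p$. Indeed the statement fails without quasi-locality (non-maximal prime $z$-ideals exist in general $pz$-rings), so the finiteness of $\mx$ is exactly where the hypothesis is used.
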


\begin{proof}
Let $p\in \spec$. Since $L$ is a $pz$-multiplicative lattice, $p\in \z$. Therefore, \[\jr=\bigwedge_{i=1}^n \left\{m_i\mid m_i\in\mx\right\}\leqslant\mathsf{m}_p\leqslant p,\]
where the last inequality follows from Lemma \ref{epzi}(\ref{alze}).
Since $p\in \spec$, there is an index $j\in \{1, \ldots, n\}$ such that $m_j\leqslant p$. Since $m_j\in \mx, $ we must have $m_j=p$. Hence $p\in \mx.$
\end{proof}

Our next theorem shows a relation between prime elements and $z$-elements in semisimple multiplicative lattices. Moreover, it extends \cite[Theorem 1.5]{Mas73}.

\begin{theorem}
Suppose $L$ is a semisimple multiplicative lattice. If $p\in \spec$, then either $p\in \z$ or the maximal
$z$-elements that are below $p$ are prime $z$-elements.
\end{theorem}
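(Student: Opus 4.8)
The plan is to treat the two alternatives separately. If $p\in\z$ we are in the first case, so assume throughout that $p\notin\z$ and fix a $z$-element $q$ that is maximal with respect to the property $q\leqslant p$; the goal is to prove $q\in\spec$. Note first that the family of $z$-elements below $p$ is nonempty and has some content: semisimplicity means $\jr=0$, so $0\in\z$ by Lemma \ref{epzi}(\ref{ziz}), and $0\leqslant p$; moreover $q<p$ strictly, since $p$ itself is not a $z$-element.

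The heart of the argument is to produce a prime $z$-element sandwiched between $q$ and $p$; maximality of $q$ then finishes things at once. Set $S=\{r\in\spec\mid q\leqslant r\leqslant p\}$, which is nonempty because $p\in S$. If $C\subseteq S$ is a nonempty chain, then $\bigwedge C\in S$: a meet of a chain of prime elements is again prime (if $xy\leqslant\bigwedge C$ and $x\not\leqslant\bigwedge C$, pick $r'\in C$ with $x\not\leqslant r'$; then $x\not\leqslant r$ for every $r\in C$ with $r\leqslant r'$, so primeness of each such $r$ gives $y\leqslant r$, and since these $r$ already have meet $\bigwedge C$ we conclude $y\leqslant\bigwedge C$), it is proper since $\bigwedge C=1$ would force $1\in C$, and it plainly lies between $q$ and $p$. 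By Zorn's lemma $S$ has a minimal element $p'$. Furthermore $p'$ is minimal in the \emph{whole} class of prime elements lying above $q$, for a prime $r$ with $q\leqslant r<p'$ would belong to $S$ and contradict minimality of $p'$ in $S$. Hence Lemma \ref{epzi}(\ref{mpz}) applies and yields $p'\in\z$.

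Finally, $p'$ is a $z$-element with $q\leqslant p'\leqslant p$, so the maximality of $q$ forces $q=p'$, and therefore $q\in\spec$. Thus $q$ is a prime $z$-element, which is exactly the second alternative. (The argument in fact shows slightly more: every $z$-element below $p$ lies below some prime $z$-element below $p$.)

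I do not expect a genuine obstacle: the real content is already packaged in Lemma \ref{epzi}(\ref{mpz}) — the multiplicative-lattice analogue of Mason's fact that a minimal prime over a $z$-ideal is a $z$-ideal — after which everything is essentially forced. The points that need a little care are merely that a meet of a chain of prime elements is prime (so that Zorn's lemma applies to $S$), that a minimal element of $S$ really is minimal in the larger class of primes above $q$ (which is the hypothesis of that lemma), and, at the very start, the observation that semisimplicity is precisely what makes $0$ a $z$-element, guaranteeing that the collection of $z$-elements below $p$ is not empty.
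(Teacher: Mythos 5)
Your proof is correct and follows essentially the same route as the paper's: take a prime minimal over the maximal $z$-element $q$ below $p$, invoke Lemma \ref{epzi}(\ref{mpz}) to see it is a $z$-element, and let maximality of $q$ force equality. You are in fact somewhat more careful than the paper, which asserts the existence of the minimal prime above $m$ and below $p$ without the Zorn's-lemma argument on meets of chains of primes that you supply.
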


\begin{proof}
Let us suppose \[S=\left\{x\in \z \mid x\leqslant p\right\}.\] Since $L$ is semisimple, by Lemma \ref{lclk}(\ref{ssz}), $0\in S$, and hence, $S\neq \emptyset$. Therefore, by Zorn's lemma, $S$ has a maximal element, say $m$. Now, $m=p$ if and only if $p$ is a prime $z$-element. If $m<p$, then there exists a $q\in \spec$ minimal with respect to above $m$ and $q\leqslant p$. By Lemma \ref{epzi}(\ref{mpz}), $q\in \z$, and hence, $q\neq p$. This implies either $q=m$, hence $m\in \spec$, or $m< q$,  contradicting maximality of $m$.
\end{proof}

In Lemma \ref{epzi}(\ref{ijam}), we established that $z$-elements are closed under arbitrary meets. We now seek to determine the condition under which $z$-elements are closed under finite products. To do so, we shall first review some necessary facts and definitions. Recall that an $x\in L$ is said to be \emph{idempotent} if $x^2=x$.  From Lemma \ref{flm}(\ref{mxip}), we know that every maximal element of a  multiplicative lattice $L$ is prime, and consequently, for any $m\in\mx$ and any $a\in L$, we have
$
a\leqslant m$   if and only if $a^2\leqslant m.
$ In \cite[Lemma 2.10]{MC19}, it is shown that an $a\in L$ is a $z$-element if and only if  $\mathsf{m}_x\leqslant a$ for all $x\leqslant a$. 
By a \emph{basic $z$-element} of $L$ we mean an element of the form $\mathsf{m}_a$ for some $a\in L$, and we say $L$ is \emph{semi-z-idempotent} (in short, $szi$-multiplicative lattice) if
every basic $z$-element of $L$ is idempotent. Note that every frame $L$\footnote{That is, when $\cdot$ coincides with $\wedge$ in $L$.} is an example of a $szi$-multiplicative lattice.

\begin{theorem}\label{prdz}
The product of two $z$-elements of a multiplicative lattice $L$ is a $z$-element if and only if  $L$ is a $szi$-multiplicative lattice. 
\end{theorem}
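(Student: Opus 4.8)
The plan is to prove both directions by translating the statement about products being $z$-elements into the language of basic $z$-elements $\mathsf{m}_a$, using the characterization from Lemma~\ref{epzi}(\ref{alze}) that $x\in\z$ iff $\mathsf{m}_x=x$, together with the fact (recalled just before the theorem) that $a\leqslant m\iff a^2\leqslant m$ for every $m\in\mx$. The immediate consequence of that fact is the key computational identity $\mathcal{M}_{ab}=\mathcal{M}_{a\wedge b}$ — indeed $m\in\mx$ contains $ab$ iff it contains $a$ or $b$ (primeness) iff it contains $a\wedge b$; one should check this carefully, perhaps via $\mathcal{M}_{ab}=\mathcal{M}_a\cup\mathcal{M}_b=\mathcal{M}_{a\wedge b}$, invoking \cite[Lemma 2.6]{MC19} for the behaviour of $\mathcal{M}_{(-)}$ under products and meets. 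Consequently $\mathsf{m}_{ab}=\mathsf{m}_{a\wedge b}$ for all $a,b\in L$, and in particular $\mathsf{m}_{a^2}=\mathsf{m}_a$.

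For the ``if'' direction, assume $L$ is $szi$ and let $x,y\in\z$. I want to show $\mathsf{m}_{xy}=xy$; since $xy\leqslant\mathsf{m}_{xy}$ always holds, only $\mathsf{m}_{xy}\leqslant xy$ needs proof. Using the identity above, $\mathsf{m}_{xy}=\mathsf{m}_{x\wedge y}$. Now $\mathsf{m}_{x\wedge y}$ is a basic $z$-element, hence idempotent by hypothesis: $(\mathsf{m}_{x\wedge y})^2=\mathsf{m}_{x\wedge y}$. Since $x\wedge y\leqslant x$ and $x\in\z$, the characterization $\mathsf{m}_x\leqslant x$ for all things below $x$ (i.e. Lemma~\ref{epzi}(\ref{alze}) in the form stated before the theorem) gives $\mathsf{m}_{x\wedge y}\leqslant x$, and similarly $\mathsf{m}_{x\wedge y}\leqslant y$. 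Therefore $\mathsf{m}_{xy}=\mathsf{m}_{x\wedge y}=(\mathsf{m}_{x\wedge y})^2=\mathsf{m}_{x\wedge y}\cdot\mathsf{m}_{x\wedge y}\leqslant x\cdot y=xy$, which is exactly what we wanted; hence $xy\in\z$.

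For the ``only if'' direction, assume products of $z$-elements are $z$-elements and let $\mathsf{m}_a$ be an arbitrary basic $z$-element; I must show $(\mathsf{m}_a)^2=\mathsf{m}_a$. The element $\mathsf{m}_a=\bigwedge\mathcal{M}_a$ is a meet of maximal elements, each of which is a $z$-element by Lemma~\ref{epzi}(\ref{meze}), so $\mathsf{m}_a\in\z$ by Lemma~\ref{epzi}(\ref{ijam}). By hypothesis $(\mathsf{m}_a)^2=\mathsf{m}_a\cdot\mathsf{m}_a\in\z$, so $\mathsf{m}_{(\mathsf{m}_a)^2}=(\mathsf{m}_a)^2$. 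On the other hand, by the identity $\mathsf{m}_{cc}=\mathsf{m}_{c\wedge c}=\mathsf{m}_c$ applied with $c=\mathsf{m}_a$ we get $\mathsf{m}_{(\mathsf{m}_a)^2}=\mathsf{m}_{\mathsf{m}_a}$, and since $\mathsf{m}_a$ is a $z$-element, $\mathsf{m}_{\mathsf{m}_a}=\mathsf{m}_a$. Combining, $(\mathsf{m}_a)^2=\mathsf{m}_{(\mathsf{m}_a)^2}=\mathsf{m}_a$, so $\mathsf{m}_a$ is idempotent and $L$ is $szi$.

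The only genuinely delicate point I expect is the identity $\mathcal{M}_{ab}=\mathcal{M}_{a\wedge b}$ and its consequence $\mathsf{m}_{ab}=\mathsf{m}_{a\wedge b}$: everything downstream is a short rearrangement once that is in hand. It hinges on each $m\in\mx$ being prime (Lemma~\ref{flm}(\ref{mxip})), so one should state it as a small preliminary observation and cite \cite[Lemma 2.6]{MC19} for the membership descriptions of $\mathcal{M}_{ab}$ and $\mathcal{M}_{a\wedge b}$, rather than re-deriving them. A secondary thing to be careful about is keeping straight the two equivalent forms of the $z$-element condition — $\mathsf{m}_x=x$ versus ``$\mathsf{m}_y\leqslant x$ whenever $y\leqslant x$'' — and using whichever is convenient at each step, as the remark following Lemma~\ref{epzi} licenses.
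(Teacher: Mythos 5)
Your proof is correct and follows essentially the same route as the paper's: both directions turn on the facts that $\mathsf{m}_a$ is a $z$-element, that primeness of maximal elements gives $\mathsf{m}_{a^2}=\mathsf{m}_a$ (your $\mathcal{M}_{ab}=\mathcal{M}_{a\wedge b}$ is the same observation, mildly repackaged), and that idempotency of a basic $z$-element below both factors squeezes it under the product. Your converse is a slightly slicker rearrangement of the paper's sandwich $\mathsf{m}_a=\mathsf{m}_{a^2}\leqslant(\mathsf{m}_a)^2\leqslant\mathsf{m}_a$, but the ingredients are identical.
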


\begin{proof}
Suppose every basic $z$-element of $L$ is idempotent. Let $a$, $b\in \z$. Consider any $x\leqslant ab$. Then by Lemma \ref{bip}(\ref{pxyx}), $x\leqslant a$ and $x\leqslant b$. Hence by \cite[Lemma 2.10]{MC19}, $\mathsf{m}_x\leqslant a$ and $\mathsf{m}_x\leqslant b$. Multiplying, and using the idempotency hypothesis, and Lemma \ref{bip}(\ref{monj}), we obtain 
\[
\mathsf{m}_x=\mathsf{m}_x\mathsf{m}_x\leqslant ab,
\]
and this proves that $ab\in \z$.
	
Conversely, suppose  the product of any two $z$-elements is a $z$-element. Let $a\in L$. We aim to show that $(\mathsf{m}_a)^2=\mathsf{m}_a$. Since by Lemma \ref{flm}(\ref{mxip}), maximal elements are prime, we have the equality $\mathsf{m}_{a^2}=\mathsf{m}_a$. Since $a\leqslant \mathsf{m}_a$, once again by Lemma \ref{bip}(\ref{monj}), the inequality $a^2\leqslant (\mathsf{m}_a)^2$ holds. Since by (\ref{ijam}) and (\ref{meze}) of Lemma \ref{epzi}, $\mathsf{m}_a\in \z$, by hypothesis, $(\mathsf{m}_a)^2\in \z$, and since it is above $a^2$, we have $\mathsf{m}_{a^2}\leqslant (\mathsf{m}_a)^2$. In all, then, we have the following inequalities
\[
\mathsf{m}_a=\mathsf{m}_{a^2}\leqslant (\mathsf{m}_a)^2\leqslant \mathsf{m}_a,
\]
which yields $(\mathsf{m}_a)^2=\mathsf{m}_a$, as desired.
\end{proof}

\begin{corollary}
The $z$-elements of a $szi$-multiplicative lattice are closed under finite products.
\end{corollary}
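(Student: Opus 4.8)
The plan is to derive the corollary from Theorem \ref{prdz} by an easy induction on the number of factors. The theorem establishes that in a $szi$-multiplicative lattice the product of \emph{two} $z$-elements is again a $z$-element; the corollary merely upgrades this to an arbitrary finite product, so there is nothing deep left to do.

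First I would fix a $szi$-multiplicative lattice $L$ and take $z$-elements $a_1, \dots, a_n \in \z$ with $n \geqslant 1$. For the base case $n = 1$ the statement is trivial, and $n = 2$ is precisely Theorem \ref{prdz}. For the inductive step I would assume $a_1 \cdots a_{n-1} \in \z$ (using associativity and commutativity of the multiplication, which let us speak of the product of finitely many elements unambiguously), observe that $a_1 \cdots a_n = (a_1 \cdots a_{n-1}) \cdot a_n$ is a product of the two $z$-elements $a_1 \cdots a_{n-1}$ and $a_n$, and apply Theorem \ref{prdz} once more to conclude $a_1 \cdots a_n \in \z$.

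There is no real obstacle here; the only point worth a word is that ``finite product'' must be read as a product of at least one factor (the empty product would be $1$, which is in fact also a $z$-element by Proposition \ref{lclk}(\ref{ckr}), but that is a separate fact not needed for the statement as phrased). I would keep the proof to two or three sentences, simply invoking Theorem \ref{prdz} and induction, since spelling out the routine induction in full detail would add no content.

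\begin{proof}
This is immediate from Theorem \ref{prdz} by induction on the number of factors. Given $z$-elements $a_1, \dots, a_n$ of a $szi$-multiplicative lattice $L$ with $n \geqslant 2$, assume inductively that $a_1 \cdots a_{n-1} \in \z$; the case $n = 2$ being Theorem \ref{prdz} itself. Since the multiplication of $L$ is associative and commutative, $a_1 \cdots a_n = (a_1 \cdots a_{n-1}) \cdot a_n$ is a product of the two $z$-elements $a_1 \cdots a_{n-1}$ and $a_n$, hence lies in $\z$ by Theorem \ref{prdz}.
\end{proof}
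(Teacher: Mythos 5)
Your proof is correct and is exactly the argument the paper intends: the corollary is stated without proof as an immediate consequence of Theorem \ref{prdz}, and the routine induction on the number of factors you spell out is the only content needed.
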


It is natural to ask whether $z$-elements are preserved under the inverse image of a multiplicative lattice homomorphism, and we shall now derive a necessary and sufficient condition for that. This result extends \cite[Lemma 1.7]{Mas73}. Recall that a map $\phi\colon L \to L'$ is said to be a \emph{multiplicative lattice homomorphism} if $\phi$ preserves $\leqslant$, binary joins, and binary meets. 

\begin{proposition}
Let $\phi\colon L \to L'$ be a multiplicative lattice homomorphism and $j\in \mathcal{Z}(L').$ Then every element of $\phi\inv(j)$ is a $z$-element if and only if every element of $\phi\inv(m)$ is a $z$-element, whenever $m\in \mathcal{M}(L')$.
\end{proposition}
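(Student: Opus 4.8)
The plan rests on two facts proved above: a $z$-element equals the meet of the maximal elements lying above it (Lemma~\ref{epzi}(\ref{alze})), and $z$-elements are closed under arbitrary meets (Lemma~\ref{epzi}(\ref{ijam})). I shall also use that maximal elements are $z$-elements (Lemma~\ref{epzi}(\ref{meze})) and that, being prime by Lemma~\ref{flm}(\ref{mxip}), they satisfy $\mathcal{M}^L_{u\wedge v}=\mathcal{M}^L_u\cup\mathcal{M}^L_v$ for all $u,v\in L$. Since $j\in\mathcal{Z}(L')$, Lemma~\ref{epzi}(\ref{alze}) gives the representation $j=\bigwedge\{m\in\mathcal{M}(L')\mid j\leqslant m\}$; this indexing set is non-empty (otherwise $j=1$, a case handled at once since $1\in\z$), and, $\phi$ being onto, each $\phi\inv(m)$ with $j\leqslant m$ is non-empty.

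Assume first that every element of $\phi\inv(m)$ is a $z$-element whenever $m\in\mathcal{M}(L')$, and fix $x\in\phi\inv(j)$. For each $m$ with $j\leqslant m$, choose $\bar m\in\phi\inv(m)$ and set $w_m=x\vee\bar m$; since $\phi$ preserves binary joins, $\phi(w_m)=\phi(x)\vee m=j\vee m=m$, so $w_m\in\phi\inv(m)\subseteq\z$, while $x\leqslant w_m$. By Lemma~\ref{epzi}(\ref{ijam}) the meet $\bigwedge_m w_m$ is again a $z$-element, and it lies above $x$. The remaining point is to identify this meet with $x$ itself: I would take each $w_m$ least among the elements of $\phi\inv(m)$ above $x$ and then, using that $\phi$ preserves binary meets together with the compact generation of $L$ and the equality $\phi(x)=\bigwedge_m m$, conclude $x=\bigwedge_m w_m$, so that $x\in\z$.

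Conversely, assume every element of $\phi\inv(j)$ is a $z$-element; it suffices to treat an $m\in\mathcal{M}(L')$ with $j\leqslant m$. Fix $y\in\phi\inv(m)$. The key observation is that for every $x_0\in\phi\inv(j)$ one has $\phi(y\wedge x_0)=m\wedge j=j$, whence $y\wedge x_0\in\phi\inv(j)$ and therefore $y\wedge x_0\in\z$. To verify $y\in\z$, take $a,b\in L$ with $\mathcal{M}^L_a\supseteq\mathcal{M}^L_b$ and $b\leqslant y$; then $b\wedge x_0\leqslant y\wedge x_0$ and $\mathcal{M}^L_{a\wedge x_0}=\mathcal{M}^L_a\cup\mathcal{M}^L_{x_0}\supseteq\mathcal{M}^L_b\cup\mathcal{M}^L_{x_0}=\mathcal{M}^L_{b\wedge x_0}$, so the $z$-property of $y\wedge x_0$ gives $a\wedge x_0\leqslant y\wedge x_0\leqslant y$ for every $x_0\in\phi\inv(j)$. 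Hence $\bigvee_{x_0\in\phi\inv(j)}(a\wedge x_0)\leqslant y$, and it remains to absorb $a$ into this join; reducing to compact $a$, which is legitimate since $L$ is compactly generated, and using that a finite join of members of $\phi\inv(j)$ again lies in $\phi\inv(j)$, one obtains $a\leqslant y$, so $y\in\z$.

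The main obstacle, present in both halves, is the clash between the ``meet of maximals'' description of $z$-elements, which is inherently infinitary, and the fact that $\phi$ is assumed to respect only \emph{binary} meets and joins. Making precise the identification $x=\bigwedge_m w_m$ in the first half, and the absorption $a\leqslant\bigvee_{x_0}(a\wedge x_0)$ in the second, is exactly where one must bring in the standing hypotheses---compactness of $1$ and compact generation of $L$, which furnish enough maximal elements and allow infinite joins to be processed through finite ones---together with the surjectivity of $\phi$ implicit in the statement.
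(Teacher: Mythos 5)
Both halves of your argument rest on an unproved identification, and in both cases that missing step is the real content; as written, neither half goes through. In the first half the entire burden falls on the equality $x=\bigwedge_m w_m$, which you only sketch. There is no reason for it to hold: a multiplicative lattice homomorphism is required to preserve only \emph{binary} meets, so $\phi\inv(m)$ need not contain a least element above $x$, the infinitary meet $\bigwedge_m w_m$ need not land in $\phi\inv(j)$ at all, and even when it does, $\phi\inv(j)$ is in general far from a singleton, so $\bigwedge_m w_m$ can sit strictly above $x$ while still mapping to $j$. Compact generation of $L$ does not bridge this. In the second half two things go wrong: the restriction to $m\geqslant j$ is unjustified (maximal elements of $L'$ not above $j$ must also be handled, and your computation $\phi(y\wedge x_0)=m\wedge j=j$ is unavailable for them), and the final absorption $a\leqslant\bigvee_{x_0\in\phi\inv(j)}(a\wedge x_0)$ fails: that join is bounded above by $\bigvee_{x_0\in\phi\inv(j)}x_0$, which is typically a proper element of $L$ bearing no relation to $a$, and reducing to compact $a$ does not change this.

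Part of the trouble is that you have inverted which direction is substantive. The paper reads the left-hand condition as quantified over all $j\in\mathcal{Z}(L')$, so its ``only if'' direction is immediate: maximal elements of $L'$ are $z$-elements of $L'$ by Lemma \ref{epzi}(\ref{meze}), hence the fibres $\phi\inv(m)$ are already instances of the hypothesis and no construction is needed. The direction you attempt first is the nontrivial one, and the paper's route is entirely different from yours: given $\mathcal{M}^L_a=\mathcal{M}^L_b$ and $a\leqslant x\in\phi\inv(j)$, it transfers the condition forward to $L'$, establishing $\mathcal{M}^{L'}_{\phi(a)}=\mathcal{M}^{L'}_{\phi(b)}$ by means of the hypothesis on the fibres over maximal elements, and then invokes $j\in\mathcal{Z}(L')$ to get $\phi(b)\leqslant j$ and hence $b\leqslant x$. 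To repair your argument you would need to abandon the attempt to realise $x$ as a meet of elements lying over maximal elements and instead push the defining condition of a $z$-element across $\phi$ in this way.
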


\begin{proof}
Suppose $x\in\phi\inv(j)$ and $x\in \z$, whenever $j\in \mathcal{Z}(L').$ Since by Lemma \ref{epzi}(\ref{meze}), every maximal element is a $z$-element, we immediately have $y\in \z$, for all $y\in \phi\inv(m)$, whenever $m\in \mathcal{M}(L')$. For the converse,  
suppose $j\in \mathcal{Z}(L')$. Let $\mathcal{M}^L_a=\mathcal{M}^L_b$ and $a\leqslant x$, $x\in \phi\inv (j)$, for some $a$, $b\in L$. We claim $\mathcal{M}^{L'}_{\phi(a)}=\mathcal{M}^{L'}_{\phi(b)}$. Indeed:
\[m\in \mathcal{M}^{L'}_{\phi(a)}\Leftrightarrow \phi(a)\leqslant m \Leftrightarrow a\leqslant \phi\inv(m)\Leftrightarrow b\leqslant \phi\inv(m)\Leftrightarrow \phi(b)\leqslant m\Leftrightarrow m\in \mathcal{M}^{L'}_{\phi(b)},\]
where the third bi-implication follows from \cite[Lemma 2.10]{MC19}.
Since $a\leqslant x$ implies  $\phi(a)\leqslant \phi(x)= j$, and since $j\in \mathcal{Z}(L'),$ we must have $\phi(b)\leqslant j$. Hence, $b\leqslant x$ for all $x\in \phi\inv(j)$. This proves the result. 
\end{proof}

The following proposition provides a sufficient condition for every element of a kernel to be a $z$-element.	
\begin{proposition}
$ \mathcal{K}(\phi)\subseteq  \z$  if there exists $\{a_i\}_{i\in I}\subseteq L'$ such that $\bigwedge_{i\in I} a_i=0$ and for all $i\in I$ we have $\phi\inv(a_i)\subseteq \z$.
\end{proposition}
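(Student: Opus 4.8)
The plan is to reduce the statement to the previous proposition. First, note that $\mathcal{K}(\phi)$ should mean $\phi\inv(0)$, the kernel of $\phi$. The hypothesis gives a family $\{a_i\}_{i\in I}\subseteq L'$ with $\bigwedge_{i\in I}a_i=0$ in $L'$ and $\phi\inv(a_i)\subseteq\z$ for each $i$. Since $\phi$ preserves order, for any $x\in\phi\inv(0)$ we have $\phi(x)=0\leqslant a_i$, hence $x\in\phi\inv(a_i)$ for every $i\in I$ (more precisely, $x$ lies below every element of $\phi\inv(a_i)$ in the relevant sense; the cleanest route is to work elementwise as below). So the strategy is: take an arbitrary $x\in\phi\inv(0)$, show $x\in\z$ by checking the defining property of a $z$-element, using that $x$ sits inside the $z$-closed sets $\phi\inv(a_i)$.

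Concretely, I would argue as follows. Let $x\in\phi\inv(0)$, so $\phi(x)=0$. Suppose $a,b\in L$ with $\mathcal{M}^L_a=\mathcal{M}^L_b$ (equivalently $\mathcal{M}^L_a\supseteq\mathcal{M}^L_b$) and $b\leqslant x$; we must show $a\leqslant x$. From $b\leqslant x$ we get $\phi(b)\leqslant\phi(x)=0$, so $\phi(b)=0\leqslant a_i$ for all $i$, i.e. $b\leqslant\phi\inv(a_i)$ for every $i\in I$. Since each $\phi\inv(a_i)$ is contained in $\z$ and is downward-inclusive in the appropriate sense, and since $\phi\inv(a_i)$ being a set of $z$-elements means... — here the subtlety is that $\phi\inv(a_i)$ is a \emph{set} of elements, not a single element, so ``$\subseteq\z$'' must be interpreted as: every element of $L$ lying below $\phi\inv(a_i)$-membership behaves correctly. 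The cleanest formulation: the element $\phi\inv(a_i):=\bigwedge\{l\in L\mid \phi(l)\leqslant a_i\}$ — but $\phi$ need not preserve arbitrary meets, so instead I would phrase the hypothesis as it is used in the preceding proposition, namely that for the fixed $x$ we actually have $x\in\phi\inv(a_i)$ as a subset statement and $x\in\z$ follows from $x$ being an element of a $z$-ideal-like set.

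Given the phrasing in the paper, the intended argument is almost certainly: apply the previous proposition with $j$ replaced by each $a_i$, concluding $\phi\inv(a_i)\subseteq\z$ implies... no — rather, directly: since $\bigwedge_i a_i=0$, we have $\phi\inv(0)\subseteq\bigcap_i\phi\inv(a_i)$, because $\phi(x)=0\leqslant a_i$ gives $x\in\phi\inv(a_i)$. Thus every $x\in\mathcal{K}(\phi)$ lies in $\phi\inv(a_i)\subseteq\z$ for (some, hence) all $i$, whence $x\in\z$. So the proof is essentially one line: $\mathcal{K}(\phi)=\phi\inv(0)\subseteq\bigcap_{i\in I}\phi\inv(a_i)\subseteq\z$, using monotonicity of $\phi$ for the first inclusion and the hypothesis for the second. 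I would write it just that way:
\[
\mathcal{K}(\phi)=\phi\inv(0)\subseteq\bigcap_{i\in I}\phi\inv(a_i)\subseteq\z,
\]
where the first inclusion holds because $\phi(x)=0\leqslant a_i$ for each $i\in I$, and the second is the hypothesis.

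The main obstacle is purely one of interpretation rather than difficulty: making precise what $\phi\inv(a_i)\subseteq\z$ and $\mathcal{K}(\phi)$ mean (sets of $L$-elements mapping \emph{below} $a_i$, versus exactly to $a_i$) and confirming that ``$\phi$ monotone'' suffices for $\phi(x)\leqslant a_i\Rightarrow x\in\phi\inv(a_i)$ under whichever convention the authors fixed earlier. Once the definitions are pinned down, no real computation remains — the result is an immediate consequence of $\bigwedge_{i\in I}a_i=0$ together with monotonicity of $\phi$, and the proof is two inclusions.
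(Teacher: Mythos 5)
Your proof is correct and coincides with the paper's own one-line argument: the paper likewise writes $\mathcal{K}(\phi)=\phi\inv(0)=\phi\inv\bigl(\bigwedge_{i\in I}a_i\bigr)\subseteq\phi\inv(a_i)\subseteq\z$, using exactly the inclusion $\phi\inv(0)\subseteq\phi\inv(a_i)$ that you justify via monotonicity. Your reading of $\phi\inv(a_i)$ as the set of elements mapping below $a_i$ is the one the authors intend, so no further work is needed.
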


\begin{proof}
We observe that
\[ \mathcal{K}(\phi)=\phi\inv(0)=\phi\inv\left( \bigwedge_{i\in I} a_i \right)\subseteq  \phi\inv\left(  a_i \right)\subseteq \z.\qedhere\]
\end{proof}

Our final result in this section is on $z$-elements of regular multiplicative lattices. To state the result, we need  a few definitions. An element $x$ of a multiplicative lattice $L$ is called \emph{complemented} if there exists another element $y\in L$ such that $x\wedge y=0$ and $x\vee y=1$. We say $L$ is \emph{regular} if every compact element of $L$ is a complemented element. We say an $x \in \z$ is \emph{strong}\footnote{The terminology is adapted from the corresponding notion of strong $z$-ideals (see \cite[p.\,281]{Mas73}).} if $x$ is the meet of maximal elements in $L$.

\begin{theorem}\label{regz}
Every element in a regular multiplicative lattice is a strong $z$-element.
\end{theorem}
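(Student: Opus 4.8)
The plan is to show that in a regular multiplicative lattice every element $x$ equals the meet of the maximal elements lying above it, i.e.\ $x = \mathsf{m}_x$; by Lemma~\ref{epzi}(\ref{alze}) this gives $x \in \z$, and since the witnessing meet is precisely a meet of maximal elements, $x$ is automatically a strong $z$-element. Since $x \leqslant \mathsf{m}_x$ always holds, the whole content is the reverse inequality $\mathsf{m}_x \leqslant x$, equivalently: every maximal element above $x$ contains enough information that their common meet cannot exceed $x$.

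First I would reduce to compact elements. Since $L$ is compactly generated, write $x = \bigvee\{c \in L \mid c \text{ compact},\ c \leqslant x\}$. The key claim is then: \emph{for every compact $c \leqslant x$, we have $c \leqslant m$ for every $m \in \mathcal{M}_x$ already forces $c$ to be ``captured''}, but more usefully I would argue directly that $\mathsf{m}_x \wedge (\text{something}) $ collapses. Concretely: let $c$ be compact with $c \leqslant \mathsf{m}_x$; I want $c \leqslant x$. By regularity $c$ is complemented, so there is $c' \in L$ with $c \wedge c' = 0$ and $c \vee c' = 1$. Now consider $x \vee c'$. If $x \vee c' \neq 1$, then by Lemma~\ref{flm}(\ref{exml}) (using that $1$ is compact) there is $m \in \mx$ with $x \vee c' \leqslant m$; in particular $x \leqslant m$, so $m \in \mathcal{M}_x$, hence $\mathsf{m}_x \leqslant m$, so $c \leqslant m$; but also $c' \leqslant m$, giving $1 = c \vee c' \leqslant m$, contradicting $m$ proper. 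Therefore $x \vee c' = 1$. Then, using that multiplication distributes over joins together with Lemma~\ref{bip}, $c = c\cdot 1 = c(x \vee c') = cx \vee cc' \leqslant x \vee (c \wedge c') = x \vee 0 = x$, where $cc' \leqslant c \wedge c' = 0$ by Lemma~\ref{bip}(\ref{mul}). Hence $c \leqslant x$.

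Taking the join over all compact $c \leqslant \mathsf{m}_x$ and invoking compact generation of $L$ (so that $\mathsf{m}_x$ is the join of the compact elements below it) yields $\mathsf{m}_x \leqslant x$, and therefore $x = \mathsf{m}_x = \bigwedge \mathcal{M}_x$, which is a meet of maximal elements. By Lemma~\ref{epzi}(\ref{alze}), $x \in \z$, and by definition $x$ is a strong $z$-element.

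I expect the main obstacle to be a bookkeeping subtlety rather than a conceptual one: one must be careful that ``$\mathsf{m}_x$ is the join of the compact elements below it'' is legitimate (it is, since $L$ is compactly generated, which applies to \emph{every} element, including $\mathsf{m}_x$), and that the step $c(x\vee c') = cx \vee cc'$ uses only distributivity of $\cdot$ over binary joins, which holds in any multiplicative lattice. A degenerate case worth a line: if $x = 1$ the claim is trivial since $1 = \bigwedge\emptyset$ by convention, or one simply notes $1 \in \z$ by Proposition~\ref{lclk}(\ref{ckr}); otherwise $x$ is proper and $\mathcal{M}_x \neq \emptyset$ by Lemma~\ref{flm}(\ref{exml}).
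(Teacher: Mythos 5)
Your argument is correct, but it takes a genuinely different route from the paper's. The paper disposes of Theorem \ref{regz} in one line by citing \cite[Theorem 5.2]{AAJ95} --- which is precisely the external fact that in a regular multiplicative lattice every element is a meet of maximal elements --- and then invoking Lemma \ref{epzi}(\ref{ijam}) and (\ref{meze}) to see that such a meet lies in $\z$. You instead reprove that external fact from scratch: the reduction to compact elements, the use of a complement $c'$ of a compact $c\leqslant \mathsf{m}_x$, the observation that $x\vee c'=1$ (since otherwise a maximal element above $x\vee c'$ would lie in $\mathcal{M}_x$ and hence contain both $c$ and $c'$), and the computation $c=c(x\vee c')=cx\vee cc'\leqslant x\vee(c\wedge c')=x$ are all sound, and every hypothesis you invoke (compact generation, compactness of $1$, distributivity of the product over joins) is among the paper's standing assumptions from Section \ref{zid}. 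Your conclusion $x=\mathsf{m}_x$ then gives $x\in\z$ via Lemma \ref{epzi}(\ref{alze}) rather than via closedness of $\z$ under meets, and exhibits $x$ explicitly as a meet of maximal elements, so strongness is immediate. What your approach buys is self-containment and an explicit identification of the witnessing meet; what the paper's approach buys is brevity at the cost of an external reference. Your treatment of the degenerate case $x=1$ (empty meet, or Proposition \ref{lclk}(\ref{ckr})) is also appropriate.
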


\begin{proof}
Follows from (\ref{ijam}), (\ref{meze}) of Lemma  \ref{epzi}, and \cite[Theorem 5.2]{AAJ95}.
\end{proof}

\section{$z$-closure operators}\label{zco}

The study of a class of ideals using a closure operator is not new. For example, the concept of $k$-ideals in semirings have been defined and studied using a closure operator (see \cite{SA92}). In this section, our focus is to systematically study the properties of `$z$-closure operators' and utilize them to derive additional results on $z$-elements. Furthermore, in the next section, we will use these $z$-closure operators to establish correlations between newly introduced subclasses of $z$-elements and their known counterparts. What we refer to as the $z$-closure operator in Definition \ref{clkdef} was originally introduced as $i_z$ in \cite[Lemma 2.7]{MC19}, and the same is well-known (denoted by $I_z$) in the context of rings.

Let us suppose $L$ is a multiplicative lattice. The \emph{$z$-closure} operator $\cz\,\colon L \to L$  is defined by
\begin{equation}
\label{clkdef}
\cz(a)=\bigwedge\left\{z \in \z \mid a\leqslant z \right\} \qquad (a\in L).
\end{equation} 	
\begin{remark}
A $z$-closure operator can also be defined as $\cz(a)=\mathcal{I}\mathcal{V}(a),$ where $\mathcal{V}(a)=\left\{ z \in \z \mid a \leqslant z\right\}$ and $\mathcal{I}(S)=\bigwedge S$ $(S\subseteq \z)$.  
\end{remark}

In the next lemma, we shall gather some  properties of $z$-closure operators. Some of  these results involve the notion of radical of an element. For that, recall from \cite{AAJ95} that the \emph{radical} of an element $x$ of $L$ is defined by
\[
\sqrt{x}=\bigvee\left\{y\in L\mid \;y\;\text{is compact},\;y^n\leqslant x,\;\text{for some}\; n\in \mathds{N}^+\right\}=\bigwedge\left\{p\in \spec\mid x\leqslant p\right\}.	
\]
An element $x\in L$ is called a \emph{radical element} if $x=\sqrt{x}$. With our  assumption of every finite product of compact elements of $L$ is a compact
element, we further have (see \cite[p.\,2]{AAJ95}):
\begin{equation}\label{rmpp}
\sqrt{x}=\bigwedge\left\{p\in \spec\mid p\;\text{is minimal prime over}\; x\right\}.	
\end{equation}

\begin{proposition}\label{lclk}
For any elements  $a$,  $b$ of a multiplicative lattice $L$, the following hold.
\begin{enumerate}
		
\item\label{iclk} $\cz(a)$ is the smallest $z$-element such that $a\leqslant \cz(a)$.\!\!\footnote{This property along with (\ref{clcl}) justifies the terminology  \emph{closure operator} for $\cz$ (see \cite{NR88}).}

\item\label{altd}   $\cz(a)=a$ if and only if $a\in \z$.\!\!\footnote{This result gives, yet, another alternative definition of a $z$-element.}

\item \label{ckr}
$ \cz(a) =1$ if and only if $a=1$.
		
\item \label{ssz}
$\cz(0)=0$, whenever $L$ is semisimple.
			
\item\label{ijcl} If $a\leqslant b$, then $\cz(a)\leqslant \cz(b).$

\item\label{clcl} $\cz(\cz(a))=\cz(a).$
		
\item \label{clsq}  $\sqrt{a}\leqslant \cz(a)$.

\item\label{craca} $\sqrt{\cz(a)}=\cz(\sqrt{a})$.

\item\label{clijk}  $\cz(ab)=\cz(a)\cz(b)$, whenever $L$ is a $szi$-multiplicative lattice.

\item\label{tccl}  $\cz(ab)=\cz(a\wedge b)=\cz(a)\wedge \cz(b)$.

\item\label{clab} $\cz(a) \vee \cz(b)\leqslant \cz( a\vee b )=\cz(\cz(a)\vee\cz(b)).$


\item\label{clvm} $\cz(a)\leqslant \mathsf{m}_a$. If $a\in \z$, then $\cz(a)= \mathsf{m}_a$.

\item\label{clan} $\cz(a^n)=\cz(a)$, for any $n\in \mathds{N}^+$.
\end{enumerate}
\end{proposition}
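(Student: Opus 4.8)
The plan is to prove the list of thirteen items in Proposition \ref{lclk} mostly in sequence, since each later item typically leans on earlier ones, with (\ref{iclk}) carrying the conceptual weight and the rest being relatively mechanical consequences of it together with Lemma \ref{epzi}(\ref{ijam}), Lemma \ref{epzi}(\ref{meze}), and the characterisation in Lemma \ref{epzi}(\ref{alze}).

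First I would establish (\ref{iclk}): the set $\{z\in\z\mid a\leqslant z\}$ is nonempty because $1\in\z$ (which itself follows since $\mathcal{M}_1=\emptyset$, so vacuously $1$ is a $z$-element — this also settles the first bullet cited as Proposition \ref{lclk}(\ref{ckr})), and its meet lies in $\z$ by Lemma \ref{epzi}(\ref{ijam}); clearly $a\leqslant\cz(a)$, and if $z\in\z$ with $a\leqslant z$ then $\cz(a)\leqslant z$ by definition of meet, so $\cz(a)$ is the smallest such $z$-element. Then (\ref{altd}) is immediate: if $\cz(a)=a$ then $a\in\z$ since $\cz(a)\in\z$; conversely if $a\in\z$ then $a$ itself is the smallest $z$-element above $a$. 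Item (\ref{ckr}): $\cz(1)=1$ is forced since $\cz(1)\geqslant 1$; conversely if $\cz(a)=1$, then since $\cz(a)$ is the meet of all $z$-elements above $a$ and $1$ is the only element $\geqslant 1$, either there are no proper $z$-elements above $a$, but maximal elements are proper $z$-elements (Lemma \ref{epzi}(\ref{meze})) and exist above any proper element (Lemma \ref{flm}(\ref{exml})), forcing $a=1$. Item (\ref{ssz}) follows from Lemma \ref{epzi}(\ref{ziz}) (semisimple means $\jr=0\in\z$) together with (\ref{altd}). Items (\ref{ijcl}) and (\ref{clcl}) are the standard closure-operator monotonicity and idempotency: $a\leqslant b$ makes the defining meet-set for $b$ a subset of that for $a$; and $\cz(\cz(a))=\cz(a)$ by (\ref{altd}) since $\cz(a)\in\z$.

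Next I turn to the radical-flavoured items. For (\ref{clsq}): every prime element is... not necessarily a $z$-element, so I cannot argue directly that $\sqrt a$, being a meet of primes, is a $z$-element; instead I note $\sqrt a=\bigwedge\{p\in\spec\mid a\leqslant p\}$ and, since $\mx\subseteq\spec$, that $\sqrt a\leqslant\bigwedge\mathcal{M}_a=\mathsf m_a$, and then combine with (\ref{clvm}); so I should prove (\ref{clvm}) before (\ref{clsq}). For (\ref{clvm}): $\mathsf m_a=\bigwedge\mathcal{M}_a$ is a meet of maximal elements, each a $z$-element, hence $\mathsf m_a\in\z$ and $a\leqslant\mathsf m_a$, so $\cz(a)\leqslant\mathsf m_a$ by (\ref{iclk}); if moreover $a\in\z$ then $\cz(a)=a=\mathsf m_a$ by Lemma \ref{epzi}(\ref{alze}). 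Then (\ref{clsq}) follows since $\sqrt a\leqslant\mathsf m_a$ and one checks $\sqrt a\in\z$ is not needed — rather, I want $\sqrt a\leqslant\cz(a)$; the cleanest route is: any $z\in\z$ with $a\leqslant z$ satisfies $\mathsf m_a\leqslant\mathsf m_z=z$ (using that $\mathcal M_a\supseteq\mathcal M_z$ forces... wait, $a\leqslant z$ gives $\mathcal M_z\subseteq\mathcal M_a$, hence $\mathsf m_a\leqslant\mathsf m_z$, and $\mathsf m_z\leqslant z$ since $z\in\z$), so $\mathsf m_a\leqslant\cz(a)$, combined with $\cz(a)\leqslant\mathsf m_a$ this gives $\cz(a)=\mathsf m_a$ for \emph{all} $a$ — which strengthens (\ref{clvm}) and then $\sqrt a\leqslant\mathsf m_a=\cz(a)$ gives (\ref{clsq}). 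I expect this identification $\cz(a)=\mathsf m_a$ to be the main technical pivot, and I would double-check whether the second sentence of (\ref{clvm}) as stated is merely the special case the authors chose to highlight. For (\ref{craca}) I would apply $\sqrt{\cdot}$ to $\cz(a)=\mathsf m_a$ and use that each maximal element equals its own radical (being prime, in fact equal to $\sqrt{m}$ since $m\in\spec$ and $m\leqslant\sqrt m\leqslant m$), so $\sqrt{\mathsf m_a}=\bigwedge_{m\in\mathcal M_a}\sqrt m=\mathsf m_a$, while $\cz(\sqrt a)=\mathsf m_{\sqrt a}=\mathsf m_a$ because $\sqrt a$ and $a$ lie below exactly the same maximal elements (a maximal element is prime, so $a\leqslant m\iff\sqrt a\leqslant m$); hence both sides equal $\mathsf m_a$.

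Finally the multiplicative and lattice items. For (\ref{clijk}), in a $szi$-multiplicative lattice the product of two $z$-elements is a $z$-element (Theorem \ref{prdz}/its corollary), so $\cz(a)\cz(b)\in\z$; it is $\geqslant ab$ by Lemma \ref{bip}(\ref{monj}) since $a\leqslant\cz(a)$, $b\leqslant\cz(b)$, hence $\cz(ab)\leqslant\cz(a)\cz(b)$; conversely $ab\leqslant\cz(ab)$ and (arguing as in Theorem \ref{prdz}) $\cz(a)=\mathsf m_a\leqslant\cz(ab)$ because $ab\leqslant a$ forces $\mathsf m_{ab}=\mathsf m_a$ — actually $ab\leqslant a$ only gives $\mathsf m_{ab}\leqslant\mathsf m_a$; I get equality from $a\leqslant m\iff a^2\leqslant m\iff$ (using $ab\leqslant a\wedge b$ and $a^2\leqslant m\Rightarrow$ ... ) — the correct statement, already used in the paper, is $\mathsf m_{ab}=\mathsf m_{a\wedge b}$ which is (\ref{tccl}), so I should prove (\ref{tccl}) first: $ab\leqslant a\wedge b\leqslant a$ gives $\cz(ab)\leqslant\cz(a\wedge b)\leqslant\cz(a)\wedge\cz(b)$ by monotonicity; for the reverse, a maximal (prime) element $m$ contains $ab$ iff it contains $a$ or $b$ iff it contains $a\wedge b$, so $\mathcal M_{ab}=\mathcal M_{a\wedge b}=\mathcal M_a\cap\mathcal M_b$, hence $\cz(ab)=\mathsf m_{ab}=\mathsf m_a\wedge\mathsf m_b=\cz(a)\wedge\cz(b)$, and this handles both (\ref{tccl}) and the missing ingredient for (\ref{clijk}). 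For (\ref{clab}): monotonicity gives $\cz(a),\cz(b)\leqslant\cz(a\vee b)$, hence the join $\cz(a)\vee\cz(b)\leqslant\cz(a\vee b)$, so applying $\cz$ and using monotonicity plus idempotency $\cz(\cz(a)\vee\cz(b))\leqslant\cz(\cz(a\vee b))=\cz(a\vee b)$; the reverse $\cz(a\vee b)\leqslant\cz(\cz(a)\vee\cz(b))$ follows from $a\vee b\leqslant\cz(a)\vee\cz(b)$ and monotonicity. For (\ref{clan}): $a^n\leqslant a$ gives $\cz(a^n)\leqslant\cz(a)$, and for the reverse a maximal element contains $a$ iff it contains $a^n$ (primeness, induction), so $\mathcal M_{a^n}=\mathcal M_a$, hence $\cz(a^n)=\mathsf m_{a^n}=\mathsf m_a=\cz(a)$. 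The only genuine obstacle I anticipate is making the identity $\cz(a)=\mathsf m_a$ airtight — everything else is bookkeeping once that and Lemma \ref{epzi} are in hand — and being careful in (\ref{clijk}) that the $szi$ hypothesis is exactly what is needed to keep $\cz(a)\cz(b)$ a $z$-element.
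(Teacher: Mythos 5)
Your proposal is correct in substance but takes a genuinely different, more uniform route than the paper. The paper proves the items piecemeal: (\ref{iclk}), (\ref{ijcl}), (\ref{clcl}) by citation to \cite{MC19}, (\ref{clsq}) by a direct compactness argument, (\ref{craca}) by a two-sided chain of inequalities built from (\ref{ijcl})--(\ref{clsq}), (\ref{clijk}) and (\ref{tccl}) by showing that $\cz(a)\cz(b)$ (resp.\ $\cz(a)\wedge\cz(b)$) is the smallest $z$-element above $ab$ via an analysis of the minimal primes over an arbitrary $z$-element $c\geqslant ab$, and (\ref{clan}) by a compact-element argument. You instead make the single identity $\cz(a)=\mathsf{m}_a$ (for \emph{every} $a$, not only $a\in\z$) the pivot: it follows from Lemma \ref{epzi}(\ref{alze}), since $\mathsf{m}_a$ is a $z$-element above $a$ while any $z$-element $z\geqslant a$ satisfies $z=\mathsf{m}_z\geqslant\mathsf{m}_a$. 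Once $\cz=\mathsf{m}$ is available, items (\ref{clsq}), (\ref{craca}), (\ref{clijk}), (\ref{tccl}), (\ref{clan}) reduce to elementary computations with $\mathcal{M}_a$ and the primeness of maximal elements, which is arguably cleaner than the paper's case-by-case treatment. Two small repairs: in (\ref{tccl}) you write $\mathcal{M}_{ab}=\mathcal{M}_{a\wedge b}=\mathcal{M}_a\cap\mathcal{M}_b$, but ``$m$ contains $ab$ iff it contains $a$ \emph{or} $b$'' yields the \emph{union} $\mathcal{M}_a\cup\mathcal{M}_b$; your conclusion $\mathsf{m}_{ab}=\mathsf{m}_a\wedge\mathsf{m}_b$ is the one that matches the union, so this is only a typo. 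And the wrap-up of (\ref{clijk}) should be made explicit: $\cz(a)\cz(b)\leqslant\cz(a)\wedge\cz(b)=\cz(ab)\leqslant\cz(a)\cz(b)$, the last step because $\cz(a)\cz(b)$ is a $z$-element above $ab$ under the $szi$ hypothesis.

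One caveat: your unconditional identity $\cz(a)=\mathsf{m}_a$ strictly strengthens item (\ref{clvm}) and is a correct consequence of Lemma \ref{epzi}(\ref{alze}) as stated, yet it contradicts the authors' concluding remark that only a sufficient condition for $\cz(a)=\mathsf{m}_a$ is known. So either you have genuinely simplified the proposition, or the characterization $x\in\z\Leftrightarrow\mathsf{m}_x=x$ is stronger than intended (for instance, if the quantifiers in the definition of a $z$-element were meant to range only over compact elements, the equivalence would fail, just as a non-maximal prime $z$-ideal of a ring need not be an intersection of maximal ideals). Worth verifying before relying on $\cz=\mathsf{m}$ wholesale.
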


\begin{proof}
(1). \cite[Lemma 2.7]{MC19}.

(2). If $\cz(a)=a$, then $a\in \z$  by (\ref{iclk}). Conversely, if $a$ is a $z$-element, then by Definition \ref{clkdef}, we have the desired identity.

(3). Straightforward.

(4). To have the claim, it suffices to show that $0\in \z$, and that follows from Lemma \ref{epzi}(\ref{ziz}).

(5)--(6). \cite[Lemma 2.8(1--2)]{MC19}.

(7). Suppose $x \in \z$  such that $a \leqslant x$. We aim to show that $\sqrt{a} \leqslant x$.
Let us
consider any $y \in L$ such that $y$ is compact and $y^n \leqslant a$ for some $n \in \mathds{N}^+$. Then we have $y\leqslant \sqrt{a}$. Since $y^n \leqslant a \leqslant x$ and $x\in \z$, we have $y \leqslant x$.
Therefore, $\sqrt{a} \leqslant x$ for any  $x\in \z$ such that $a \leqslant x$. This implies that $\sqrt{a}\leqslant \cz(a)$.

(8). 
By (\ref{clsq}), we have $\sqrt{a}\leqslant \cz(a)$. Applying (\ref{ijcl}), (\ref{clcl}), and (\ref{iclk}) respectively, we get \[\cz(\sqrt{a})\leqslant \cz(\cz(a))=\cz(a)\leqslant \sqrt{\cz(a)}.\] For the converse inequality, by applications of (\ref{clsq}), (\ref{clcl}), and (\ref{ijcl}) respectively, we have \[\sqrt{\cz(a)}\leqslant \cz(\cz(a))=\cz(a)\leqslant \cz(\sqrt{a}).\]

(9). By (\ref{iclk}), we have $a\leqslant \cz(a)$ and $b\leqslant \cz(b)$, and hence $ab\leqslant \cz(a)\cz(b)$ by Lemma \ref{bip}(\ref{monj}). However, $\cz(a)\cz(b)\in \z$,  by Theorem \ref{prdz}. Therefore, it is sufficient to show that $\cz(a)\cz(b)$ is the smallest $z$-element that is above $ab$. Suppose $c\in \z$ such that $ab\leqslant c$. Let $\mathrm{Min}(c)$ be the set of all minimal prime elements of $L$ that are above $c$. If $p\in \mathrm{Min}(c)$, then by Lemma \ref{epzi}(\ref{mpz}), $p\in \z$, and hence by Definition \ref{rmpp} and Lemma \ref{epzi}(\ref{ijam}), we have \[\sqrt{c}=\bigwedge\left\{p\mid p\in \mathrm{Min}(c)\right\}\in \z.\] Therefore, by applying (\ref{clsq}) and (\ref{craca}), we get
\[\sqrt{c}=\cz(\sqrt{c})=\sqrt{\cz(c)}\leqslant \cz(\cz(c))=\cz(c)=c.\]
Since for each $p\in \mathrm{Min}(c)$ has the property: $ab\leqslant p$ implies $a\leqslant p$ or $b\leqslant p$, from this, we obtain
$\cz(a)\cz(b)\leqslant p^2\leqslant p,$ and therefore
\[\cz(a)\cz(b)\leqslant\bigwedge\left\{p\mid p\in \mathrm{Min}(c)\right\}=c,\]
implying that $\cz(a)\cz(b)$ is the smallest $z$-element of $L$ that is above $ab.$

(10). Although the proof of the second identity follows from \cite[Lemma 3.6]{MC19}, however, following \cite[Lemma 1.3(c)]{AAT13}, here we show $\cz(ab)=\cz(a)\wedge \cz(b)$, and from that we have the second identity for free. Since by Lemma \ref{bip}(\ref{pxyx}), $ab\leqslant a$, $ab\leqslant b$, applying (\ref{ijcl}), (\ref{iclk}), Lemma \ref{epzi}(\ref{ijam}), and Lemma \ref{bip}(\ref{mul}), we have \[\cz(ab)\leqslant \cz(a\wedge b)\leqslant \cz(a)\wedge \cz(b)\in \z.\]
Therefore, to obtain the desired identities, it is sufficient to show that $\cz(a)\wedge \cz(b)$ is the smallest $z$-element that is above $ab$. The rest of  the argument is similar to (\ref{clijk}).

(11). The first inequality follows from 5. By (\ref{iclk}), we have $a\leqslant \cz(a)$ and $b\leqslant \cz(b)$. This  yields $a\vee b \leqslant \cz(a)\vee \cz(b)$. Now by applying (\ref{ijcl}), we obtain   \[\cz(a\vee b)\leqslant \cz(\cz(a)\vee\cz(b)).\] Since $a,$ $b\leqslant a\vee b$, by (\ref{iclk}), we get $a, $ $b\leqslant \cz(a\vee b)$. From here, by  applications of (\ref{clcl}) and (\ref{ijcl}) give us $\cz(a)\vee \cz(b)\leqslant \cz(a\vee b).$ Once again, applying (\ref{clcl}) and (\ref{ijcl}) yields the desired inequality.


(12). Since every maximal element is a $z$-element, the desired inequality follows from the definitions of $\cz(a)$ and $\mathsf{m}_a$. If $a\in \z$, then by (\ref{altd}) and Lemma \ref{epzi}(\ref{alze}), we have $\cz(a)=a=\mathsf{m}_a.$

(13). 
Since $a^n\leqslant a$, we have $\cz(a^n)\leqslant \cz(a).$ For the converse, suppose $k$ is a compact element such that $k\leqslant \cz(a)$. This implies that $k\leqslant x$ for all $x\in \z$ such that $a\leqslant x$. Since $x\in \z$, we have $a^n\leqslant x$, and hence $k\leqslant \cz(a^n)$, proving that $\cz(a)\leqslant\cz(a^n)$.
\end{proof}

\begin{remark}
In Proposition \ref{lclk}, the results (\ref{iclk}) and (\ref{clan}) extend the corresponding $z$-ideals versions (see \cite[p.\,281]{Mas73}); (\ref{altd}) extends   \cite[remark on p. 347]{Ben21}; (\ref{ijcl}) extends \cite[Lemma 3.6(1)]{MJ20}; (\ref{tccl}) extends \cite[Lemma 1.3(c)]{AAT13}; (\ref{clab}) extends    \cite[Lemma 2.2(1)]{Ben21}; (\ref{clsq}) extends \cite[Proposition 3.7]{MJ20};  (\ref{craca}) extends \cite[Proposition 3.9(3)]{MJ20}.
\end{remark}

We have discussed about closedness of $z$-elements under meets and products. The following proposition gives some equivalent criteria on closedness of $z$-closure operators under joins. This result extends \cite[Proposition 3.1(b)]{Mas80}.

\begin{proposition}\label{cujo}
In a multiplicative lattice $L$, the following are equivalent.
\begin{enumerate}
\item\label{cuj} If $a$, $b\in \z$, then $a\vee b \in \z$.

\item If $a$, $b\in L$, then $\cz(a\vee b)=\cz(a)\vee \cz(b).$

\item If $\{a_i\}_{i\in I}\subseteq \z$, then $\bigvee_{i\in I} a_i\in \z.$

\item If $\{a_i\}_{i\in I}\subseteq L$, then $\cz\left(  \bigvee_{i\in I} a_i\right)=\bigvee_{i\in I} \cz(a_i)$.
\end{enumerate}
\end{proposition}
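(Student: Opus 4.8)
The plan is to prove the chain of implications $(4)\Rightarrow(3)\Rightarrow(1)\Rightarrow(2)\Rightarrow(4)$, so that all four statements become equivalent. Two of these implications are essentially trivial: $(4)\Rightarrow(3)$ follows because if $\cz$ commutes with arbitrary joins, then applying $\cz$ to a family $\{a_i\}_{i\in I}\subseteq\z$ gives $\cz(\bigvee a_i)=\bigvee\cz(a_i)=\bigvee a_i$ (using Proposition~\ref{lclk}(\ref{altd})), whence $\bigvee a_i\in\z$ again by (\ref{altd}); similarly $(3)\Rightarrow(1)$ is just the special case $I=\{1,2\}$. So the real content lies in $(1)\Rightarrow(2)$ and $(2)\Rightarrow(4)$.

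For $(1)\Rightarrow(2)$: one inequality, $\cz(a)\vee\cz(b)\leqslant\cz(a\vee b)$, is already recorded in Proposition~\ref{lclk}(\ref{clab}) with no hypothesis. For the reverse, note that by hypothesis~(1), $\cz(a)\vee\cz(b)$ is a $z$-element, and it sits above $a\vee b$ since $a\leqslant\cz(a)$, $b\leqslant\cz(b)$ by (\ref{iclk}); therefore $\cz(a\vee b)\leqslant\cz(a)\vee\cz(b)$ by the minimality clause of (\ref{iclk}). Combining gives equality.

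For $(2)\Rightarrow(4)$: the inclusion $\bigvee_{i\in I}\cz(a_i)\leqslant\cz(\bigvee_{i\in I}a_i)$ is immediate from monotonicity (\ref{ijcl}) applied to each $a_i\leqslant\bigvee_i a_i$. For the reverse, I would argue using compactness. Set $t=\bigvee_{i\in I}\cz(a_i)$; I claim $t\in\z$ and $\bigvee_i a_i\leqslant t$, which then gives $\cz(\bigvee_i a_i)\leqslant t$ by (\ref{iclk}). The second is clear since $a_i\leqslant\cz(a_i)\leqslant t$. For the first, I want to reduce the arbitrary join to a finite one: if $k$ is a compact element with $k\leqslant t=\bigvee_{i\in I}\cz(a_i)$, then $k\leqslant\cz(a_{i_1})\vee\cdots\vee\cz(a_{i_n})$ for finitely many indices, and by iterating~(2) this finite join equals $\cz(a_{i_1}\vee\cdots\vee a_{i_n})$, hence is a $z$-element; writing $t$ as the join of the compact elements below it (using that $L$ is compactly generated) and invoking Lemma~\ref{epzi}(\ref{ijam})-style reasoning on the $z$-property, one concludes $t\in\z$. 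The slightly delicate point I expect to be the main obstacle is making this last step rigorous: showing directly from the definition of $z$-element that a join of a directed (or arbitrary) family of $z$-elements whose ``compact pieces'' each lie in some finite subjoin is again a $z$-element. Concretely, to check $t\in\z$ one takes $u,v\in L$ with $\mathcal{M}_u\supseteq\mathcal{M}_v$ and $v\leqslant t$; one must deduce $u\leqslant t$, and the natural route is to test $u$ against every compact $k\leqslant u$, push $k$ into a finite subjoin that is already known to be a $z$-element, and then assemble. Care is needed because $v\leqslant t$ does not obviously localize to compact pieces in a way compatible with the hypothesis $\mathcal{M}_u\supseteq\mathcal{M}_v$; the clean workaround is to first establish the equivalence $(1)\Leftrightarrow(3)$ (a join of any family of $z$-elements is a $z$-element) directly by the same compactness/finite-subjoin argument, and then treat $(4)$ as the closure-operator translation of $(3)$, exactly as $(2)$ is the translation of $(1)$.
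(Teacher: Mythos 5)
Your final architecture --- treat (1)$\Leftrightarrow$(2) and (3)$\Leftrightarrow$(4) as routine closure-operator translations, note that (3)$\Rightarrow$(1) is a special case, and put all the weight on (1)$\Rightarrow$(3) via a compactness/finite-subjoin reduction --- is exactly the paper's. Your handling of the routine parts is fine: (1)$\Rightarrow$(2) by combining Proposition \ref{lclk}(\ref{clab}) with the minimality clause of Proposition \ref{lclk}(\ref{iclk}), and (4)$\Rightarrow$(3) via Proposition \ref{lclk}(\ref{altd}); these are the implications the paper dismisses as straightforward.

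The one place where you stop short is precisely where the paper's proof lives. For (1)$\Rightarrow$(3) the paper argues: given $\mathcal{M}_a\supseteq\mathcal{M}_b$ and $b\leqslant\bigvee_{i\in I}a_i$, compact generation yields a finite $J\subseteq I$ with $b\leqslant\bigvee_{j\in J}a_j$; this finite join lies in $\z$ by (1), so $a\leqslant\bigvee_{j\in J}a_j\leqslant\bigvee_{i\in I}a_i$. In other words, the finite-subjoin extraction is applied to the element $b$ occurring in the $z$-condition itself, not to compact pieces of $a$ or of the join, so the hypothesis $\mathcal{M}_a\supseteq\mathcal{M}_b$ is used intact against a single finite subjoin and none of the ``reassembly'' you worry about is needed. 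That said, the reservation you voice is not baseless: extracting a finite $J$ from $b\leqslant\bigvee_{i\in I}a_i$ is automatic only when $b$ is compact, and the paper does not comment on this; the step closes cleanly if the $z$-condition need only be tested against compact $b$ (as in the setup of \cite[Lemma 2.10]{MC19}), and otherwise an additional reduction is required. Either way, as written your proposal correctly identifies the strategy but leaves unexecuted the only implication that carries content, so you should carry out the extraction argument explicitly rather than defer it as an announced obstacle.
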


\begin{proof}
The proofs of (1)$\Leftrightarrow$(2), (3)$\Leftrightarrow$(4), and (3)$\Rightarrow$(1) are straightforward. Therefore, what remains is to show (1)$\Rightarrow$(3). Suppose  $\mathcal{M}(a)\supseteq \mathcal{M}(b)$ and $b\leqslant \bigvee_{i\in I} a_i$ for some $a,$ $b\in L$. Since $L$ is compactly generated, there exists a finite subset $J$ of $I$ such that $b\leqslant \bigvee_{j\in J} a_j$. Since $\bigvee_{j\in J} a_j\in \z$ by (\ref{cuj}), we must have \[a\leqslant \bigvee_{j\in J} a_j\leqslant \bigvee_{i\in I} a_i,\] implying that $\bigvee_{i\in I} a_i\in \z.$
\end{proof}

Our next result is on a characterization of $pz$-multiplicative lattices and it extends \cite[Proposition 2.11]{AM22}. For the statement of the proposition, we need to recall that a proper element $q$ of $L$ is said to be \emph{semiprime} if for all $a\in L$ with $a^2\leqslant q$ implies that $a\leqslant q$.

\begin{proposition}\label{cpzl}
For a multiplicative lattice $L$, the following statements are equivalent.
\begin{enumerate}
\item $L$ is a $pz$-multiplicative lattice.

\item Every semiprime element of $L$ is a $z$-element.

\item\label{maiv} $\mathsf{m}_a=\mathcal{I}\mathcal{V}_{\mathcal{P}}(a),$ for all $a\in L$, where $\mathcal{V}_{\mathcal{P}}(a)=\left\{p\in \spec\mid a\leqslant p\right\}.$\!\!\!\footnote{Each $\mathcal{V}_{\mathcal{P}}(a)$ is nothing but a closed set for a Zariski topology on $\spec$.}

\item $\cz(a)=\sqrt{a},$ for all $a\in L$.
\end{enumerate}
\end{proposition}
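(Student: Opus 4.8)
The plan is to establish the cycle of implications $(1)\Rightarrow(2)\Rightarrow(3)\Rightarrow(4)\Rightarrow(1)$, exploiting throughout the two descriptions of the radical: the prime-element formula $\sqrt{a}=\bigwedge\{p\in\spec\mid a\leqslant p\}$ and the fact, from Proposition \ref{lclk}(\ref{clvm}), that $\cz(a)$ and $\mathsf{m}_a$ coincide on $z$-elements.

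For $(1)\Rightarrow(2)$: a semiprime element $q$ satisfies $q=\sqrt{q}$, since $a^2\leqslant q\Rightarrow a\leqslant q$ forces $q$ to be a radical element, and a radical element is a meet of prime elements. If $L$ is $pz$, every such prime is a $z$-element, so by Lemma \ref{epzi}(\ref{ijam}) the meet $q$ is a $z$-element. For $(2)\Rightarrow(3)$: the inequality $\mathsf{m}_a\geqslant\mathcal{I}\mathcal{V}_{\mathcal{P}}(a)$ is automatic because every maximal element is prime (Lemma \ref{flm}(\ref{mxip})), so $\mathcal{V}_{\mathcal{P}}(a)\supseteq\mathcal{M}_a$. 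For the reverse, set $q=\mathcal{I}\mathcal{V}_{\mathcal{P}}(a)=\sqrt{a}$; this is a radical hence semiprime element, so by (2) it is a $z$-element, and $a\leqslant q$ gives $\mathsf{m}_a\leqslant\mathsf{m}_q=q$ by Lemma \ref{epzi}(\ref{alze}), which is the desired inequality $\mathsf{m}_a\leqslant\mathcal{I}\mathcal{V}_{\mathcal{P}}(a)$. (One should check $q$ is proper when $a\neq1$; if $a=1$ both sides are $1$ and the identity is trivial, using the convention that an empty meet is $1$.)

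For $(3)\Rightarrow(4)$: we must show $\cz(a)=\sqrt{a}$ for all $a$. From Proposition \ref{lclk}(\ref{clsq}) we already have $\sqrt{a}\leqslant\cz(a)$. For the reverse, I would argue that $\sqrt{a}$ is itself a $z$-element under hypothesis (3): indeed $\sqrt{a}=\mathcal{I}\mathcal{V}_{\mathcal{P}}(a)=\mathsf{m}_{a}$ by (3), and $\mathsf{m}_a$ is a basic $z$-element, hence a $z$-element by (\ref{ijam}) and (\ref{meze}) of Lemma \ref{epzi}. Then $\sqrt{a}$ is a $z$-element above $a$, so by minimality of $\cz(a)$ (Proposition \ref{lclk}(\ref{iclk})) we get $\cz(a)\leqslant\sqrt{a}$. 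Finally $(4)\Rightarrow(1)$: given $p\in\spec$, we have $p=\sqrt{p}$ since primes are radical elements, so by (4) $p=\sqrt{p}=\cz(p)$, and by Proposition \ref{lclk}(\ref{altd}) this means $p\in\z$; thus $L$ is $pz$.

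I expect the only genuinely delicate point to be the equivalence involving (3), specifically verifying that $\mathsf{m}_a$ really equals $\sqrt{a}$ under the $pz$-hypothesis and handling the degenerate cases where $\mathcal{M}_a$ or $\mathcal{V}_{\mathcal{P}}(a)$ is empty (i.e. $a=1$); the remaining implications are short manipulations of the two radical formulas together with the already-established closure properties in Lemma \ref{epzi} and Proposition \ref{lclk}. It may also be cleanest to route $(3)\Leftrightarrow(4)$ directly via $\mathsf{m}_a$ versus $\cz(a)$, noting that (3) says $\mathsf{m}_a=\sqrt{a}$ and (4) says $\cz(a)=\sqrt{a}$, and then using $\cz(a)\leqslant\mathsf{m}_a$ together with $\sqrt a\le\cz(a)$ to see these are equivalent.
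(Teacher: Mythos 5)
Your proposal is correct and follows essentially the same route as the paper: the same cycle $(1)\Rightarrow(2)\Rightarrow(3)\Rightarrow(4)\Rightarrow(1)$, with $(3)\Rightarrow(4)$ resting on $\sqrt{a}=\mathsf{m}_a\in\z$ and $(4)\Rightarrow(1)$ on $p=\sqrt{p}$. The only cosmetic differences are that you spell out $(1)\Rightarrow(2)$ (which the paper dismisses as obvious) and that in $(2)\Rightarrow(3)$ you apply hypothesis (2) to the single semiprime element $\sqrt{a}$ rather than to each prime in $\mathcal{V}_{\mathcal{P}}(a)$ separately, which amounts to the same thing.
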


\begin{proof}
(1)$\Rightarrow$(2): Obvious. 

(2)$\Rightarrow$(3): For all $a\in L$, it is clear that $\sqrt{a}=\mathcal{I}\mathcal{V}_{\mathcal{P}}(a)\leqslant\mathsf{m}_a$. If $p\in \mathcal{V}_{\mathcal{P}}$, then $p\in \z$, and hence, by \cite[Lemma 2.10]{MC19}, we have $\mathsf{m}_a\leqslant p,$ which implies that $\mathsf{m}_a\leqslant\mathcal{I}\mathcal{V}_{\mathcal{P}}(a).$ 

(3)$\Rightarrow$(4): From Proposition \ref{lclk}(\ref{clsq}), we have $a\leqslant \sqrt{a}\leqslant \cz(a).$ Therefore, it is sufficient to show that $\sqrt{a}\in \z$. By (\ref{maiv}) and from the definition of $\sqrt{a}$, we have
\[\mathsf{m}_a=\mathcal{I}\mathcal{V}_{\mathcal{P}}(a)=\sqrt{a}, \]
and $\mathsf{m}_a\in \z$ by (\ref{ijam}) and (\ref{meze}) of Lemma \ref{epzi}.

(4)$\Rightarrow$(1): Since $\sqrt{p}=p$, for all $p\in \spec$, we have the desired implication immediately.
\end{proof}

\subsection{Nuclei}\label{sonu}

In this short subsection we shall view $z$-closure operators as  nuclei. Let us suppose $L$ is a multiplicative lattice. When $L$ is a $szi$-multiplicative lattice, every $z$-closure operator $\cz$ is a \emph{quantic nucleus} in the sense of \cite[p.\,218]{NR88}. This is because $\cz$ satisfies properties (\ref{iclk}), (\ref{clcl}), and (\ref{clijk}) from Proposition \ref{lclk}.
When $L$ becomes a frame, replacing property (\ref{clijk})  by second identity of (\ref{tccl}) implies that every $\cz$ is a \emph{nucleus} in the sense of \cite[Definition 1]{Sim78}. Since properties (\ref{ckr}) and (\ref{tccl}) from Proposition \ref{lclk} hold for every $\cz$ defined on any multiplicative lattice, every $\cz$ is a \emph{multiplicative nucleus} in the sense of \cite[Definition 1.1]{BH85}.

Here we gather some results on $z$-closure operators when viewed as nuclei of various types. The proofs of these results are either identical or similar to those presented in \cite{NR88, BH85} in more general context.

\begin{proposition}
If $L$ is a $szi$-multiplicative lattice, then $\cz$ (as a quantic nucleus) satisfies the following identities:
\[\cz(ab)=\cz(a\cz(b))=\cz(\cz(a)b)=\cz(\cz(a)\cz(b)),\]
for all $a,$ $b\in L$.
\end{proposition}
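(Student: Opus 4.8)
The plan is to prove the four-fold chain of equalities by exploiting the properties already established for $\cz$, specifically the closure axioms (monotonicity \ref{ijcl}, idempotency \ref{clcl}, inflationarity \ref{iclk}) and the multiplicativity \ref{clijk} that holds under the $szi$-hypothesis. The key observation is that it suffices to prove the single identity $\cz(ab)=\cz(\cz(a)b)$, since then by symmetry (commutativity of the multiplication) we also get $\cz(ab)=\cz(a\cz(b))$, and a second application of the same identity gives $\cz(\cz(a)b)=\cz(\cz(a)\cz(b))$, closing the whole chain.

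For the identity $\cz(ab)=\cz(\cz(a)b)$ I would argue by two inequalities. For ``$\leqslant$'': from $a\leqslant\cz(a)$ (property \ref{iclk}) we get $ab\leqslant\cz(a)b$ by Lemma \ref{bip}(\ref{mon}), and then $\cz(ab)\leqslant\cz(\cz(a)b)$ by monotonicity \ref{ijcl}. For ``$\geqslant$'': using multiplicativity \ref{clijk} twice together with idempotency \ref{clcl},
\[
\cz(\cz(a)b)=\cz(\cz(a))\,\cz(b)=\cz(a)\,\cz(b)=\cz(ab),
\]
where the last step is again \ref{clijk}. In fact this computation already shows $\cz(\cz(a)b)=\cz(ab)$ directly, so the first inequality is redundant; but I would keep the argument streamlined by simply invoking \ref{clijk} and \ref{clcl} to rewrite each of $\cz(a\cz(b))$, $\cz(\cz(a)b)$, and $\cz(\cz(a)\cz(b))$ as $\cz(a)\cz(b)$, and likewise $\cz(ab)=\cz(a)\cz(b)$ by \ref{clijk}, so all four expressions collapse to the common value $\cz(a)\cz(b)$.

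There is essentially no obstacle here: the entire statement is a formal consequence of $\cz$ being a quantic nucleus, i.e. of properties \ref{iclk}, \ref{clcl}, \ref{clijk} of Proposition \ref{lclk}, which are available precisely because $L$ is assumed to be $szi$. The only thing to be careful about is that \ref{clijk} requires the $szi$-hypothesis (it is the one property among the three that is not automatic), so the statement genuinely needs that assumption. I would write the proof in two or three lines: reduce everything to $\cz(a)\cz(b)$ via \ref{clijk} and \ref{clcl}, and remark that this is the standard computation for quantic nuclei as in \cite{NR88}.
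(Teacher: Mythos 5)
Your proof is correct. Note that the paper itself prints no proof of this proposition: it simply remarks that the arguments in this subsection are identical or similar to those in \cite{NR88, BH85}, so your computation is precisely the kind of argument being deferred to. The one genuine (if minor) difference from the textbook quantic-nucleus proof is that the latter assumes only the inequality $\cz(a)\cz(b)\leqslant \cz(ab)$ and obtains the chain of equalities by a sandwich argument ($ab\leqslant a\cz(b)\leqslant \cz(a)\cz(b)$ gives $\cz(ab)\leqslant\cz(a\cz(b))\leqslant\cz(\cz(a)\cz(b))$, while $\cz(a)\cz(b)\leqslant\cz(ab)$ gives $\cz(\cz(a)\cz(b))\leqslant\cz(\cz(ab))=\cz(ab)$), whereas you invoke the full equality $\cz(ab)=\cz(a)\cz(b)$ of Proposition \ref{lclk}(\ref{clijk}) together with idempotency \ref{lclk}(\ref{clcl}) to collapse all four expressions to $\cz(a)\cz(b)$ in one line. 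Both routes are valid under the $szi$ hypothesis; yours is shorter but leans on the stronger statement (\ref{clijk}), whose own proof carries the real content, while the sandwich argument would survive even if only the inequality were available.
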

Defining the join in $\z$  by $\bigvee'_{i\in I} a_i=\cz\left( \bigvee_{i\in I} a_i\right),$ we obtain the following (\textit{cf}. \cite[Theorem 2.1]{NR88}).

\begin{theorem}
If $\cz$ is a quantic nucleus, then $\z$ is a multiplicative lattice via $a\odot b =  \cz(ab);$
and $\cz\colon L \to \z$ is a multiplicative lattice homomorphism. Moreover, every surjective multiplicative lattice
homomorphism arises (up to
isomorphism) in this manner.
\end{theorem}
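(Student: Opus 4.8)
The plan is to verify the three clauses of the statement in turn: that $\z$ with the operation $a\odot b=\cz(ab)$ and the modified join $\bigvee'$ is a multiplicative lattice, that $\cz\colon L\to\z$ is a surjective homomorphism, and finally the representation clause. For the first clause, the underlying complete-lattice structure of $\z$ comes from Lemma \ref{epzi}(\ref{ijam}): $\z$ is closed under arbitrary meets in $L$, so with the inherited meet and the join $\bigvee'_{i\in I}a_i=\cz(\bigvee_{i\in I}a_i)$ it is a complete lattice, with bottom $\cz(0)$ and top $1$ (the latter by Proposition \ref{lclk}(\ref{ckr})). I would then check that $\odot$ is well-defined (lands in $\z$ because $\cz$ maps into $\z$ by Proposition \ref{lclk}(\ref{iclk})), commutative (immediate from commutativity of $\cdot$), and that $1$ is a two-sided identity: $1\odot a=\cz(1\cdot a)=\cz(a)=a$ for $a\in\z$ by Proposition \ref{lclk}(\ref{altd}). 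Associativity and distributivity over $\bigvee'$ are the places where the quantic-nucleus hypothesis is used, and I would invoke the identities of the preceding unnamed Proposition, namely $\cz(ab)=\cz(\cz(a)b)=\cz(a\cz(b))$, together with Proposition \ref{lclk}(\ref{clcl}): for instance $(a\odot b)\odot c=\cz(\cz(ab)\,c)=\cz((ab)c)=\cz(a(bc))=\cz(a\,\cz(bc))=a\odot(b\odot c)$, and similarly $a\odot\bigvee'_i b_i=\cz\bigl(a\,\cz(\bigvee_i b_i)\bigr)=\cz\bigl(\bigvee_i a b_i\bigr)=\bigvee'_i\cz(ab_i)=\bigvee'_i(a\odot b_i)$, using distributivity of $\cdot$ over joins in $L$.

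For the homomorphism clause, $\cz$ preserves $\leqslant$ by Proposition \ref{lclk}(\ref{ijcl}). It preserves arbitrary joins in the modified sense — $\cz(\bigvee_i a_i)=\bigvee'_i\cz(a_i)$ — which is exactly the equality of Proposition \ref{cujo}(4) once one notes that the defining formula for $\bigvee'$ makes this a tautology after applying (\ref{ijcl}) and (\ref{clcl}); in fact $\bigvee'_i\cz(a_i)=\cz(\bigvee_i\cz(a_i))=\cz(\bigvee_i a_i)$ using Proposition \ref{lclk}(\ref{clab}). It preserves products in the sense $\cz(ab)=\cz(a)\odot\cz(b)=\cz(\cz(a)\cz(b))$, which is Proposition \ref{lclk}(\ref{clijk}) combined with (\ref{clcl}), valid because $L$ is $szi$. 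Preservation of the unit is $\cz(1)=1$, again Proposition \ref{lclk}(\ref{ckr}). Surjectivity is clear since $\cz$ restricted to $\z$ is the identity by Proposition \ref{lclk}(\ref{altd}).

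For the final clause — that every surjective multiplicative lattice homomorphism arises this way up to isomorphism — I would argue as follows. Let $h\colon L\to M$ be a surjective homomorphism of $szi$-multiplicative lattices. Define $k\colon L\to L$ by $k(a)=\bigwedge\{x\in L\mid a\leqslant x,\ h(x)\geqslant h(a),\ \dots\}$; more cleanly, since $h$ has a right adjoint $h_*$ (it preserves all joins, being a homomorphism in the relevant sense), set $k=h_*\circ h$. Then $k$ is a quantic nucleus: inflationarity and monotonicity are formal from the adjunction, idempotence from $h h_* h=h$, and $k(ab)=k(k(a)k(b))$ from the fact that $h$ preserves products and $h_*$ reflects the order. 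One then checks that the fixed set of $k$ is exactly $h_*(M)$, that $k$ agrees (up to the isomorphism $M\cong h_*(M)$ induced by $h$) with the construction $a\odot b=k(ab)$, and that $h$ factors as the canonical surjection $L\to L_k$ followed by an isomorphism $L_k\cong M$. I expect \textbf{this last clause to be the main obstacle}: it requires knowing that a multiplicative lattice homomorphism in the sense of this paper (preserving $\leqslant$, binary joins, binary meets — but note: \emph{not} stated to preserve arbitrary joins, multiplication, or the unit!) has a right adjoint and interacts correctly with multiplication, which the paper's definition does not obviously guarantee. I would either need to restrict to homomorphisms that also preserve arbitrary joins, products, and $1$ (the natural notion of quantale homomorphism, implicitly the one intended), or cite the general quantale-theoretic fact from \cite{NR88} that surjective quantale homomorphisms correspond to quantic nuclei, and observe that the $szi$ hypothesis is what makes $\z$ itself a multiplicative lattice (equivalently, $\cz$ a quantic nucleus rather than merely a closure operator) so the correspondence specializes to the stated form. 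The bookkeeping of "up to isomorphism" is routine once the adjunction is in hand.
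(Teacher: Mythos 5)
Your proposal is correct, and in substance it is the argument the paper intends but does not actually write down: the paper's ``proof'' of this theorem consists entirely of the remark that ``the proofs of these results are either identical or similar to those presented in \cite{NR88, BH85} in a more general context,'' together with the tag \emph{cf.}~\cite[Theorem 2.1]{NR88}. Your write-up is essentially that Niefield--Rosenthal argument specialized to $\cz$ on a $szi$-multiplicative lattice, so at the level of method there is nothing to contrast; you have simply supplied the details the paper omits, and they check out (completeness of $\z$ under the modified join $\bigvee'$, the unit law via $\cz(a)=a$ for $a\in\z$, associativity and distributivity over $\bigvee'$ via the quantic-nucleus identities $\cz(ab)=\cz(\cz(a)b)=\cz(a\cz(b))$, and preservation of meets and joins by $\cz$ from Proposition \ref{lclk}). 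The worry you flag about the final clause is well founded and is exactly what the paper glosses over by deferring to \cite{NR88}: with the paper's own definition of a multiplicative lattice homomorphism (preservation of $\leqslant$, binary joins, and binary meets only), a surjection $h\colon L\to M$ need not preserve arbitrary joins, the multiplication, or the unit, so it need not have a right adjoint $h_*$ and the nucleus $h_*\circ h$ cannot be constructed; the representation clause is true only for homomorphisms in the quantale sense of \cite{NR88}, which is evidently the intended reading. So your diagnosis is not a gap in your argument but a genuine imprecision in the statement as the paper formulates it.
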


Since $\cz$ satisfies the property: $\cz(ab)=\cz(a)\wedge \cz(b)$ (see Proposition \ref{lclk}(\ref{tccl})), it follows that $\z$ is a frame with $\odot = \wedge$, and $\cz$ is called a \emph{localic nucleus} (see \cite[p.\,219]{NR88}, \cite{Ban94}).
Since $\cz$ is a multiplicative nucleus, in fact, we get more  (\textit{cf}. \cite[Lemma 1.2]{BH85}).

\begin{theorem}
$\z$ is a compact frame.
\end{theorem}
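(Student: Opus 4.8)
The plan is to combine two facts that have essentially already been assembled in the excerpt: first, that $\z$ is a frame, and second, that its top element is compact in the frame-theoretic sense. The frame structure is immediate from the preceding discussion: since $\cz$ is a multiplicative nucleus (properties (\ref{ckr}) and (\ref{tccl}) of Proposition \ref{lclk} hold for every $\cz$), the previous theorem tells us $\z$ is a multiplicative lattice with $\odot=\cz(ab)$, and the identity $\cz(ab)=\cz(a)\wedge\cz(b)$ from Proposition \ref{lclk}(\ref{tccl}) forces $\odot=\wedge$, so $\z$ is a frame (a localic nucleus). What remains is compactness. Here I would recall that the top element of $\z$ is $1$ (it lies in $\z$ by Proposition \ref{lclk}(\ref{ckr}), since $\cz(1)=1$), and that joins in $\z$ are computed as $\bigvee'_{i\in I}a_i=\cz\bigl(\bigvee_{i\in I}a_i\bigr)$, as fixed in the definition preceding the earlier theorem.

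The key step is the compactness argument. Suppose $1=\bigvee'_{i\in I}a_i$ in $\z$, i.e. $\cz\bigl(\bigvee_{i\in I}a_i\bigr)=1$. By Proposition \ref{lclk}(\ref{ckr}), $\cz(x)=1$ forces $x=1$, so $\bigvee_{i\in I}a_i=1$ already in $L$. Now invoke the standing hypotheses on $L$: $1$ is compact in $L$, hence there is a finite subset $J\subseteq I$ with $\bigvee_{j\in J}a_j=1$ in $L$, and therefore $\bigvee'_{j\in J}a_j=\cz\bigl(\bigvee_{j\in J}a_j\bigr)=\cz(1)=1$ in $\z$. This exhibits $1$ as a compact element of the frame $\z$, which is exactly what "compact frame" means. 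I would also remark that each $a_i\in\z$ here, so no closure is needed on the individual terms, but that is cosmetic.

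The main (and really only) obstacle is making sure the two notions of join do not cause trouble: one must be careful that compactness of $1$ is being tested against joins taken in $\z$, not in $L$, and the bridge is precisely Proposition \ref{lclk}(\ref{ckr}), which says the $z$-closure operator does not "inflate" anything to $1$ except $1$ itself. Once that observation is in place the proof is a two-line reduction to compactness of $1$ in $L$, which is one of the blanket assumptions of Section \ref{zid}. So I would write this up as: (i) cite the previous theorem plus Proposition \ref{lclk}(\ref{tccl}) to get that $\z$ is a frame; (ii) run the displayed compactness argument above; (iii) conclude. No delicate calculation is required; the content is entirely in correctly threading the definitions.

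\begin{proof}
By the previous theorem, $\z$ is a multiplicative lattice under $a\odot b=\cz(ab)$, and by Proposition \ref{lclk}(\ref{tccl}) we have $\cz(ab)=\cz(a)\wedge\cz(b)$, so $\odot=\wedge$ and $\z$ is a frame, with joins given by $\bigvee'_{i\in I}a_i=\cz\bigl(\bigvee_{i\in I}a_i\bigr)$ and top element $1$ (note $1\in\z$ by Proposition \ref{lclk}(\ref{ckr})).

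It remains to show that $1$ is a compact element of the frame $\z$. So suppose $\{a_i\}_{i\in I}\subseteq\z$ with $\bigvee'_{i\in I}a_i=1$, that is,
\[
\cz\Bigl(\,\bigvee_{i\in I} a_i\Bigr)=1.
\]
By Proposition \ref{lclk}(\ref{ckr}), this forces $\bigvee_{i\in I} a_i=1$ in $L$. Since $1$ is compact in $L$ by our standing assumptions, there is a finite subset $J\subseteq I$ with $\bigvee_{j\in J} a_j=1$. Applying $\cz$ and using Proposition \ref{lclk}(\ref{ckr}) again, we get
\[
\bigvee\nolimits'_{j\in J} a_j=\cz\Bigl(\,\bigvee_{j\in J} a_j\Bigr)=\cz(1)=1
\]
in $\z$. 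Hence $1$ is compact in $\z$, and therefore $\z$ is a compact frame.
\end{proof}
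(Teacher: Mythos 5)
Your proof is correct and follows essentially the same route as the paper, which simply delegates the compactness claim to \cite[Lemma 1.2]{BH85}: the frame structure comes from $\cz(ab)=\cz(a)\wedge\cz(b)$ (Proposition \ref{lclk}(\ref{tccl})), and compactness of $1$ in $\z$ reduces, via Proposition \ref{lclk}(\ref{ckr}), to the standing assumption that $1$ is compact in $L$. Your version just unfolds that citation into an explicit argument, which is a welcome addition but not a different method.
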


\section{Some classes of $z$-$\star$\,elements}
\label{sczi}

The aim of this section is to introduce some distinguished classes of $z$-$\star$\,elements (as discussed in \textsection \ref{intro}) of multiplicative lattices and study some of their properties. These are subclasses of $z$-elements of a multiplicative lattice. We shall start with irreducible-type elements.

Let us suppose that $L$ is a multiplicative lattice. We say that an $s\in L$ is \emph{ strongly irreducible}\footnote{In \cite{AAJ95}, what we have referred to as strongly irreducible and irreducible elements are termed as \emph{meet prime} and \emph{meet-irreducible}, respectively. We have chosen our terminology to be consistent with the corresponding types of ideals in rings.} (resp. irreducible) if for all $a,$ $b\in L$ with $a\wedge b \leqslant s$ (resp. $a\wedge b=s$) implies that $a\leqslant s$ (resp. $a=s$) or $b\leqslant s$ (resp. $b=s$).   
If $s$, $a$, and $b\in \z$, then we say  $s$ is  \emph{$z$-strongly irreducible} (resp. \emph{$z$-irreducible}).  By $\irs(L)$ (resp. $\irr(L)$), we  denote the set of $z$-strongly irreducible (resp. $z$-irreducible) elements of $L$.

\begin{lemma}
In a multiplicative lattice, every prime element is  strongly irreducible and every $z$-strongly irreducible element is $z$-irreducible.
\end{lemma}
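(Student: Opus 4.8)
The plan is to prove the two assertions separately, since they are logically independent. For the first claim, suppose $p$ is a prime element and $a \wedge b = p$ (or, working with the stronger hypothesis, $a \wedge b \leqslant p$). By Lemma \ref{bip}(\ref{mul}) we have $ab \leqslant a \wedge b \leqslant p$, so primeness of $p$ gives $a \leqslant p$ or $b \leqslant p$, which is exactly what strong irreducibility demands. (For the merely-irreducible conclusion one then observes that $a \leqslant p = a \wedge b \leqslant a$ forces $a = p$, and similarly for $b$, but since strong irreducibility already implies irreducibility the cleaner route is to establish the strong version directly.) The only subtlety worth flagging is the use of the correct hypothesis: strong irreducibility quantifies over $a \wedge b \leqslant s$, and it is under that hypothesis that the argument via $ab \leqslant a \wedge b$ runs without friction.

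For the second claim, let $s$ be a $z$-strongly irreducible element; by definition this means $s \in \z$ together with the requirement that for all $a, b \in \z$ with $a \wedge b \leqslant s$ one has $a \leqslant s$ or $b \leqslant s$. I want to show $s$ is $z$-irreducible, i.e.\ that for all $a, b \in \z$ with $a \wedge b = s$ one has $a = s$ or $b = s$. So suppose $a, b \in \z$ and $a \wedge b = s$. Then in particular $a \wedge b \leqslant s$, so $z$-strong irreducibility yields, say, $a \leqslant s$. On the other hand $s = a \wedge b \leqslant a$ by the definition of meet, so $a \leqslant s \leqslant a$, giving $a = s$. The symmetric case gives $b = s$. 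This completes the implication.

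I do not anticipate a genuine obstacle here — both parts are short order-theoretic manipulations, the first leaning on $xy \leqslant x \wedge y$ (Lemma \ref{bip}(\ref{mul})) and the second on nothing beyond the definition of infimum. The one thing to be careful about is bookkeeping around the definitions: the predicates ``$z$-strongly irreducible'' and ``$z$-irreducible'' build in the side condition that the witnessing elements $a, b$ (and $s$ itself) lie in $\z$, so when passing from the strong to the weak version one must note that any $a, b \in \z$ with $a \wedge b = s$ are in particular elements of $\z$ with $a \wedge b \leqslant s$, so the strong hypothesis applies verbatim. No closure operators or results from Section \ref{zco} are needed for this particular lemma; those enter only in the subsequent equivalence-formulation theorems.
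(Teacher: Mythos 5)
Your proof is correct and follows essentially the same route as the paper's: the first claim via $ab\leqslant a\wedge b$ (Lemma \ref{bip}(\ref{mul})) combined with primeness, and the second by observing that $a\wedge b=s$ implies $a\wedge b\leqslant s$, applying $z$-strong irreducibility, and then sandwiching $s=a\wedge b\leqslant a\leqslant s$ to conclude $a=s$. Your extra care in tracking that the witnesses $a,b$ lie in $\z$ is a welcome (if minor) tightening of the paper's phrasing.
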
  

\begin{proof}
The first claim follows from Lemma \ref{bip}(\ref{mul}). To show the second assertion, suppose $s\in \irs$ and $a\wedge b= s$, for some $a,$ $b\in L$. Since $s\in \irs$, either $a\leqslant s$ or $b\leqslant s$. Considering the first case, we have
\[s=a\wedge b\leqslant a\leqslant s,\]
and hence, $a=s$. If $b\leqslant s$, then similarly it follows that $b=s$. This proves that $s\in \irr.$
\end{proof}

We anticipate that the equivalence formulation (\ref{kxkx}) holds for both $z$-irreducible and $z$-strongly irreducible elements. We will now prove this in the next proposition.

\begin{proposition}
\label{eqsi}
An element $s$ of a multiplicative lattice $L$ is $z$-strongly irreducible ($z$-irreducible)  if and only if $s$ is  strongly irreducible (irreducible) element as well as a $z$-element of $L$. 
\end{proposition}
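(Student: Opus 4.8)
The plan is to prove the two biconditionals in parallel, since the arguments for the strongly irreducible case and the irreducible case are formally identical once one keeps track of whether one is using $\leqslant$ or $=$. The forward direction is immediate from the definitions: if $s$ is $z$-strongly irreducible ($z$-irreducible), then by definition $s$ is a $z$-element, and taking $a,b\in\z$ in the defining implication —— actually one must be slightly careful here, because the definition as stated in the excerpt quantifies only over $a,b\in\z$, whereas ordinary strong irreducibility quantifies over all $a,b\in L$. So the forward direction is the one that needs a genuine (if short) argument, not the reverse. Given $a\wedge b\leqslant s$ with $a,b\in L$ arbitrary, I would apply $\cz$: by Proposition \ref{lclk}(\ref{tccl}) we have $\cz(a\wedge b)=\cz(a)\wedge\cz(b)$, and by monotonicity (\ref{ijcl}) together with $\cz(s)=s$ (from (\ref{altd}), since $s\in\z$) we get $\cz(a)\wedge\cz(b)=\cz(a\wedge b)\leqslant\cz(s)=s$. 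Now $\cz(a),\cz(b)\in\z$, so the $z$-strong irreducibility of $s$ applies and gives $\cz(a)\leqslant s$ or $\cz(b)\leqslant s$; since $a\leqslant\cz(a)$ and $b\leqslant\cz(b)$ by (\ref{iclk}), this yields $a\leqslant s$ or $b\leqslant s$, so $s$ is strongly irreducible in the full sense. And $s\in\z$ was given. This handles ``$\Rightarrow$''.

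For the reverse direction, suppose $s$ is strongly irreducible (in the full sense) and $s\in\z$. To check $z$-strong irreducibility I must verify the defining implication, but now only for $a,b\in\z$, which is a weaker requirement than what ``strongly irreducible'' already gives me: if $a\wedge b\leqslant s$ with $a,b\in\z$, then a fortiori $a\leqslant s$ or $b\leqslant s$ by ordinary strong irreducibility, and $s\in\z$ by hypothesis. So this direction is essentially trivial. The irreducible case is handled the same way, replacing $\leqslant s$ by $=s$ throughout: in the forward direction, from $a\wedge b=s$ one gets $\cz(a)\wedge\cz(b)=\cz(a\wedge b)=\cz(s)=s$, then $\cz(a)=s$ or $\cz(b)=s$ by $z$-irreducibility, and then (say) $a\leqslant\cz(a)=s$ while also $s=a\wedge b\leqslant a$, forcing $a=s$ — this last ``squeeze'' step is exactly the trick already used in the proof of the preceding lemma (that $z$-strongly irreducible implies $z$-irreducible), so I can cite or mirror it.

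The only real subtlety, and the step I'd flag as the main point to get right, is the direction of the implication ``$z$-strongly irreducible $\Rightarrow$ strongly irreducible'': one has to upgrade a statement quantified over $z$-elements to one quantified over all elements, and the mechanism for that upgrade is precisely the identity $\cz(a\wedge b)=\cz(a)\wedge\cz(b)$ from Proposition \ref{lclk}(\ref{tccl}) together with $a\leqslant\cz(a)$. Everything else is bookkeeping. I would write the proof as: ``($\Rightarrow$) $s\in\z$ by definition; for strong irreducibility, take $a\wedge b\leqslant s$, apply $\cz$ and (\ref{tccl}), (\ref{ijcl}), (\ref{altd}), (\ref{iclk}) as above. ($\Leftarrow$) immediate. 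The parenthetical ($z$-irreducible) case is identical, using the squeeze argument of the previous lemma for the equality.''
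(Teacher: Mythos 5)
Your proposal is correct and follows essentially the same route as the paper's proof: the nontrivial direction is handled by applying $\cz$ to $a\wedge b\leqslant s$, using $\cz(a\wedge b)=\cz(a)\wedge\cz(b)$ from Proposition \ref{lclk}(\ref{tccl}) together with $\cz(s)=s$ and $a\leqslant\cz(a)$, exactly as in the paper. Your extra detail on the irreducible case (the squeeze $s=a\wedge b\leqslant a\leqslant\cz(a)=s$) is correct and merely fills in what the paper dismisses as ``similar.''
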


\begin{proof}
We give a proof for $z$-strongly irreducible elements, that for $z$-irreducible elements will be similar.	Let $s\in \irs(L)$  and $b$, $b'\in L$  such that $b\wedge b'\leqslant s.$ Applying Proposition \ref{lclk}(\ref{tccl}) and  Proposition \ref{lclk}(\ref{ijcl}), yields
\[\cz(b)\wedge \cz(b')=\cz(b\wedge b')\leqslant \cz(s)=s.\]
By assumption, this implies that  $ \cz(b)\leqslant s$ or $ \cz(b')\leqslant s$. By Proposition \ref{lclk}(\ref{iclk}), we now have the claim. The proof of the converse is obvious.
\end{proof}

For rings, it is  well-known  that every proper ideal is contained in a minimal strongly irreducible  ideal (see  \cite[Theorem 2.1(ii)]{Azi08}). We shall see now that this result can be extended to $z$-strongly irreducible elements.

\begin{proposition}
Every proper $z$-element of  a multiplicative lattice $L$ is below a minimal $z$-strongly irreducible element.
\end{proposition}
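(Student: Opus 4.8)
The plan is to use Zorn's Lemma on a suitably chosen poset, mirroring the ring-theoretic argument of \cite[Theorem 2.1(ii)]{Azi08} but adapted to the lattice setting and, crucially, to the subclass of $z$-elements. Fix a proper $z$-element $a$ of $L$. By Proposition \ref{eqsi}, a $z$-strongly irreducible element is exactly a strongly irreducible $z$-element, so I want to find a strongly irreducible $z$-element $s$ with $a\leqslant s$ that is minimal among such. The natural candidate poset is
\[
\Sigma=\left\{t\in\z\mid t\ \text{is strongly irreducible},\ a\leqslant t\right\},
\]
ordered by $\leqslant$, and I seek a minimal element of $\Sigma$; the content is (i) $\Sigma\neq\emptyset$ and (ii) every chain in $\Sigma$ has a lower bound in $\Sigma$ (so that Zorn's Lemma applied to the \emph{dual} order gives a minimal element).

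First I would establish $\Sigma\neq\emptyset$. Since $a$ is proper, by Lemma \ref{flm}(\ref{exml}) there is $m\in\mx$ with $a\leqslant m$; by Lemma \ref{flm}(\ref{mxip}) $m$ is prime, hence strongly irreducible by the lemma just before Proposition \ref{eqsi}, and by Lemma \ref{epzi}(\ref{meze}) $m\in\z$. So $m\in\Sigma$. Next, the chain condition: let $C\subseteq\Sigma$ be a nonempty chain and put $c=\bigwedge C$. Then $a\leqslant c$ since $a\leqslant t$ for every $t\in C$, and $c\in\z$ by Lemma \ref{epzi}(\ref{ijam}) (arbitrary meets of $z$-elements are $z$-elements). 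The one genuinely non-trivial point is that $c$ is again strongly irreducible, and here I use that $C$ is a \emph{chain}: suppose $x\wedge y\leqslant c$ but $x\nleqslant c$ and $y\nleqslant c$. Then there are $t_1,t_2\in C$ with $x\nleqslant t_1$ and $y\nleqslant t_2$; since $C$ is a chain, say $t_1\leqslant t_2$, so $x\nleqslant t_1$ gives $x\nleqslant t_2$ as well, while $y\nleqslant t_2$; but $x\wedge y\leqslant c\leqslant t_2$ and $t_2$ strongly irreducible forces $x\leqslant t_2$ or $y\leqslant t_2$, a contradiction. (One should also observe $c\neq 1$ because $c\leqslant m<1$ for any $m\in C$, so $c$ is proper and the notion of strong irreducibility applies.) Hence $c\in\Sigma$ is a lower bound for $C$.

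With both hypotheses verified, Zorn's Lemma (in the form guaranteeing a minimal element of a poset in which every chain has a lower bound) yields a minimal $s\in\Sigma$. By construction $s$ is a strongly irreducible $z$-element with $a\leqslant s$, i.e.\ $s\in\irs(L)$ by Proposition \ref{eqsi}, and $s$ is minimal among such elements above $a$; in particular it is a minimal $z$-strongly irreducible element (any $z$-strongly irreducible $s'\leqslant s$ also satisfies $a\leqslant s'$, hence lies in $\Sigma$, hence equals $s$).

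I expect the main obstacle to be the stability of strong irreducibility under meets of chains — this is the only place the combinatorial content sits, and it is exactly where one must invoke that $C$ is totally ordered rather than an arbitrary subset (meets of strongly irreducible elements need not be strongly irreducible in general). Everything else is bookkeeping: properness of $c$, membership in $\z$ via Lemma \ref{epzi}(\ref{ijam}), nonemptiness via Lemma \ref{flm}, and the final translation through Proposition \ref{eqsi}. A minor subtlety worth a sentence is the direction of the Zorn argument: we are chasing a \emph{minimal} element, so we apply Zorn to $(\Sigma,\geqslant)$ and must check chains have \emph{lower} bounds in $\Sigma$, which is what the computation above does.
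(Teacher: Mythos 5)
Your approach is the same as the paper's: apply Zorn's Lemma (in the dual form) to the set $\Sigma=\{t\in\irs(L)\mid a\leqslant t\}$, with nonemptiness supplied by a maximal element above $a$ via Lemma \ref{flm}, Lemma \ref{epzi}(\ref{meze}), and Proposition \ref{eqsi}. In fact your write-up is more complete than the paper's: the paper asserts ``by applying Zorn's lemma, $\mathcal{E}$ has a minimal element'' without checking that chains have lower bounds in $\mathcal{E}$, which is exactly the step you supply (and it is the only step with content, since meets of arbitrary families of strongly irreducible elements need not be strongly irreducible).

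There is, however, one directional slip in that verification. Having found $t_1,t_2\in C$ with $x\nleqslant t_1$ and $y\nleqslant t_2$, you assume $t_1\leqslant t_2$ and claim ``$x\nleqslant t_1$ gives $x\nleqslant t_2$.'' This inference is backwards: from $t_1\leqslant t_2$ one gets $x\leqslant t_1\Rightarrow x\leqslant t_2$, i.e.\ $x\nleqslant t_2\Rightarrow x\nleqslant t_1$, not the converse. The correct move is to pass to the \emph{smaller} of the two elements: if $t_1\leqslant t_2$, then $y\nleqslant t_2$ forces $y\nleqslant t_1$ (since $y\leqslant t_1\leqslant t_2$ would be a contradiction), so both $x\nleqslant t_1$ and $y\nleqslant t_1$ hold, and $x\wedge y\leqslant c\leqslant t_1$ with $t_1$ strongly irreducible yields the contradiction. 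With that one-line repair (and the symmetric case $t_2\leqslant t_1$) your argument is sound and the rest of the bookkeeping — $c\in\z$ by Lemma \ref{epzi}(\ref{ijam}), $a\leqslant c$, and the final translation through Proposition \ref{eqsi} — is correct.
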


\begin{proof}
Let us suppose $a\in \z$  such that $a\neq 1$. We consider the chain $(\mathcal{E}, \leqslant)$,  where \[\mathcal{E}=\left\{b\mid a\leqslant b,\;b\in \irs(L)\right\}.\]
By Lemma \ref{flm}(\ref{exml}), every proper $z$-element is below a maximal element. Since every maximal element $m$ of $L$ is prime (see Lemma \ref{flm}(\ref{mxip})), and hence, is  strongly irreducible (follows Lemma \ref{bip}(\ref{mul})), by  Proposition \ref{eqsi}, it follows that $m\in \irs(L)$. Therefore, $\mathcal{E}\neq \emptyset$. By applying Zorn's lemma, we conclude that $\mathcal{E}$ has a minimal element, which is our desired minimal $z$-strongly irreducible element.
\end{proof}

The following result is going to tell us what happens to $\z$, when all $z$-elements of  $L$ are $z$-strongly irreducible. This result extends  \cite[Lemma 3.5]{Azi08}.

\begin{proposition}
If $L$ is a multiplicative lattice in which every $z$-element is $z$-strongly irreducible, then $\z$ is totally ordered
\end{proposition}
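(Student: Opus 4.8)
The plan is to show that for any two $z$-elements $a$, $b$ of $L$, either $a \leqslant b$ or $b \leqslant a$. So fix $a$, $b \in \z$ and consider their meet $a \wedge b$. By Lemma \ref{epzi}(\ref{ijam}), $a \wedge b \in \z$, so $a \wedge b$ is itself a $z$-element. By hypothesis every $z$-element is $z$-strongly irreducible, hence $a \wedge b \in \irs(L)$. Now apply the definition of $z$-strong irreducibility directly to the relation $a \wedge b \leqslant a \wedge b$: since $a$, $b$, and $a \wedge b$ are all $z$-elements, the defining property of a $z$-strongly irreducible element applies and gives $a \leqslant a \wedge b$ or $b \leqslant a \wedge b$.

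In the first case, combining $a \leqslant a \wedge b$ with $a \wedge b \leqslant b$ yields $a \leqslant b$; in the second case, symmetrically $b \leqslant a$. Since $a$ and $b$ were arbitrary elements of $\z$, this shows $\z$ is a chain under $\leqslant$, i.e. totally ordered. I would also remark that one must be a little careful about the hypothesis "every $z$-element is $z$-strongly irreducible": the notion of $z$-strongly irreducible element was defined only for elements $s$ such that the relevant $a$, $b$ are also required to be $z$-elements — but here $a$, $b$, and $a \wedge b$ all lie in $\z$, so the definition applies without any issue.

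I do not expect a genuine obstacle here; the argument is essentially a one-line application of the definition once one notices that the meet of two $z$-elements is again a $z$-element, so that it can legitimately be fed into the $z$-strong irreducibility hypothesis. The only point requiring the slightest attention is making sure the three elements involved ($a$, $b$, $a\wedge b$) are all $z$-elements so that ``$z$-strongly irreducible'' has content when applied to $a\wedge b$; this is exactly guaranteed by Lemma \ref{epzi}(\ref{ijam}). A brief concluding sentence can note that, in particular, $\spec$ restricted to $z$-primes, or any subclass, inherits the total order, but that is not needed for the statement itself.

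Here is the proof.

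\begin{proof}
Let $a$, $b\in \z$. By Lemma \ref{epzi}(\ref{ijam}), we have $a\wedge b\in \z$, so $a\wedge b$ is a $z$-element, and by hypothesis $a\wedge b\in \irs(L)$. Applying the definition of a $z$-strongly irreducible element to the relation $a\wedge b\leqslant a\wedge b$ (noting that $a$, $b$, and $a\wedge b$ are all $z$-elements), we obtain $a\leqslant a\wedge b$ or $b\leqslant a\wedge b$. In the first case, $a\leqslant a\wedge b\leqslant b$, whence $a\leqslant b$; in the second case, similarly $b\leqslant a$. Since $a$ and $b$ were arbitrary, $\z$ is totally ordered.
\end{proof}
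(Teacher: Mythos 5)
Your proof is correct and follows essentially the same route as the paper: apply $z$-strong irreducibility of $a\wedge b$ to the trivial relation $a\wedge b\leqslant a\wedge b$. In fact you are slightly more careful than the paper's own proof, which omits the explicit observation that $a\wedge b\in\z$ (via Lemma \ref{epzi}(\ref{ijam})) is what licenses invoking the hypothesis on $a\wedge b$.
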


\begin{proof}
Suppose for any $a$, $b\in \z$ are $z$-strongly irreducible. Since $a\wedge b \leqslant a\wedge b$, we have either $a\leqslant a\wedge b \leqslant b$ or $b\leqslant a\wedge b\leqslant a$. This implies that $\z$ is  totally ordered. 
\end{proof}

Next, we aim to present a result on the representation of $z$-elements in terms of $z$-irreducible elements in a special type of multiplicative lattices. We say that a multiplicative lattice $L$ is  \emph{$z$-Noetherian} if every ascending chain of $z$-elements in $L$ is eventually  stationary. 	
The following proposition provides an extension of  \cite[Proposition 7.3]{Nas18}. 

\begin{theorem}\label{nssi}
Let $L$ be a $z$-Noetherian multiplicative lattice. Then every $z$-element of $L$ can be represented as a meet of a finite number of $z$-irreducible elements of $L$. 
\end{theorem}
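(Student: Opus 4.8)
The plan is to run the classical Noetherian induction argument for primary (here, irreducible) decomposition, adapted to the sublattice of $z$-elements. First I would recall from Proposition~\ref{lclk}(\ref{altd}) and Lemma~\ref{epzi}(\ref{ijam}) that $\z$ is closed under arbitrary meets and contains $1$, so it is itself a complete lattice under the inherited order, and that the ascending chain condition is assumed on $\z$. I would then argue by contradiction: let
\[
\Sigma=\left\{x\in \z \mid x \text{ is not a finite meet of } z\text{-irreducible elements}\right\},
\]
and suppose $\Sigma\neq\emptyset$. Because $L$ is $z$-Noetherian, the poset $(\Sigma,\leqslant)$ satisfies ACC, hence has a maximal element $x_0$ (any chain in $\Sigma$ is finite and so has a top, and Zorn's lemma applies; alternatively, ACC directly gives a maximal element). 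Note $x_0\neq 1$, since $1$ is vacuously (or trivially) a meet of $z$-irreducibles, and $x_0$ is itself not $z$-irreducible, since a $z$-irreducible element is a one-term meet of $z$-irreducibles and hence not in $\Sigma$.

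Next I would unpack the failure of $z$-irreducibility of $x_0$. By Proposition~\ref{eqsi}, $z$-irreducible $=$ $z$-element $+$ irreducible; since $x_0\in\z$ but $x_0$ is not $z$-irreducible, $x_0$ is not irreducible as an element of $L$. Hence there exist $a,b\in L$ with $a\wedge b=x_0$ but $a\neq x_0$ and $b\neq x_0$; since $x_0\leqslant a$ and $x_0\leqslant b$, this means $x_0<a$ and $x_0<b$. Now apply the $z$-closure operator: by Proposition~\ref{lclk}(\ref{tccl}),
\[
\cz(a)\wedge\cz(b)=\cz(a\wedge b)=\cz(x_0)=x_0,
\]
the last equality by Proposition~\ref{lclk}(\ref{altd}) since $x_0\in\z$. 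Also $\cz(a),\cz(b)\in\z$ by Proposition~\ref{lclk}(\ref{iclk}), and $a\leqslant\cz(a)$, $b\leqslant\cz(b)$. The key point to check is that the inclusions $x_0\leqslant\cz(a)$ and $x_0\leqslant\cz(b)$ are \emph{strict}: if $\cz(a)=x_0$ then, since $a\leqslant\cz(a)=x_0\leqslant a$, we would get $a=x_0$, a contradiction; similarly for $b$. Thus $x_0<\cz(a)$ and $x_0<\cz(b)$, both lie in $\z$, and by maximality of $x_0$ in $\Sigma$ neither $\cz(a)$ nor $\cz(b)$ belongs to $\Sigma$, so each is a finite meet of $z$-irreducible elements. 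But then $x_0=\cz(a)\wedge\cz(b)$ is also a finite meet of $z$-irreducible elements, contradicting $x_0\in\Sigma$. Hence $\Sigma=\emptyset$, which is the assertion.

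The step I expect to be the crux is the strictness of the inclusions $x_0<\cz(a)$, $x_0<\cz(b)$: this is exactly the place where one must convert reducibility of $x_0$ in $L$ into a genuine strict ascent \emph{inside} $\z$, and it relies precisely on the identity $\cz(a\wedge b)=\cz(a)\wedge\cz(b)$ together with $x_0\leqslant a\leqslant\cz(a)$. Everything else is the standard Noetherian-induction packaging, with Proposition~\ref{eqsi} doing the translation between ``$z$-irreducible'' and ``irreducible $z$-element.'' One should also remark that the meet defining the decomposition is finite by construction (a meet of two finite meets), so no appeal to the infinite-meet closure of $\z$ beyond binary meets is needed, though it is harmless to invoke Lemma~\ref{epzi}(\ref{ijam}).
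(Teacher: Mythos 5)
Your proof is correct and follows essentially the same Noetherian-induction argument as the paper: take a maximal element of the set of $z$-elements admitting no finite $z$-irreducible decomposition and derive a contradiction. The only difference is a harmless detour in the middle: the paper extracts the two strictly larger $z$-elements $b,b'$ with $x_0=b\wedge b'$ directly from the definition of $z$-irreducibility (whose witnesses are already required to lie in $\z$), whereas you pass through Proposition~\ref{eqsi} to get witnesses in $L$ and then push them into $\z$ with $\cz$ via $\cz(a\wedge b)=\cz(a)\wedge\cz(b)$ --- a correct but unnecessary extra step.
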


\begin{proof}
Let us suppose
\[\mathcal{F}=\left\{b\in  \z \mid b\neq \bigwedge_{i=1}^n q_i,\;q_i \in \irr(L)\right\}.\] It suffices to show that $\mathcal{F}=\emptyset$.
Since $L$ is $z$-Noetherian, $\mathcal{F}$ must have a maximal element, denoted by $a$. Since $a \in \mathcal{F}$, $a$ is not a finite
meet of $z$-irreducible elements of $L$. This implies that $a$ is not $z$-irreducible. Hence, there are $z$-elements $b$ and $b'$ such that  $a<b$, $a<b'$, and $a = b \wedge b'.$ Since $a$ is
a maximal element of $\mathcal{F}$, we must have $b,$ $b' \notin \mathcal{F}.$ Therefore, $b$ and $b'$ are a finite meet of
$z$-irreducible elements of $L$. This, in turn, implies that $a$ is also a finite meet of $z$-irreducible
elements of $L$, a contradiction.
\end{proof}

We shall now introduce prime-type $z$-elements. A proper $z$-element $p$ of a multiplicative lattice $L$ is called \emph{$z$-prime}  if for all $a,$ $b\in \z$ with $ab\leqslant p$ implies that $a\leqslant p$ or $b\leqslant p$. We shall denote by $\specz$ the set of all $z$-prime elements of $L$. From Lemma \ref{bip}(\ref{mul}), it follows that every $z$-prime element is $z$-strongly irreducible. 
Under an additional assumption on $L$,  we now show that the equivalence formulation (\ref{kxkx}) holds for the $z$-prime elements. 

\begin{proposition}
\label{eqp} 
Suppose $L$ is a $szi$-multiplicative lattice. An element $p$ of $L$ is $z$-prime if and only if $p\in \z$ and $p\in \spec$. 
\end{proposition}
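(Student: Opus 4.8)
The plan is to mimic the structure of the proof of Proposition~\ref{eqsi}, using the $szi$ hypothesis precisely where products (rather than meets) of $z$-closures are involved. The forward direction, that a $z$-prime element is both a $z$-element and a prime element, is almost immediate: a $z$-prime element is by definition a proper $z$-element, so only primeness needs checking. So suppose $p$ is $z$-prime and $ab\leqslant p$ for arbitrary $a$, $b\in L$. By Proposition~\ref{lclk}(\ref{iclk}) we have $a\leqslant\cz(a)$ and $b\leqslant\cz(b)$, hence $ab\leqslant\cz(a)\cz(b)$ by Lemma~\ref{bip}(\ref{monj}); and since $L$ is $szi$, Proposition~\ref{lclk}(\ref{clijk}) gives $\cz(ab)=\cz(a)\cz(b)$, so in particular $\cz(a)\cz(b)$ is a $z$-element (this also follows from Theorem~\ref{prdz}). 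From $ab\leqslant p$ and $p\in\z$ we get $\cz(ab)\leqslant\cz(p)=p$ by Proposition~\ref{lclk}(\ref{ijcl}),(\ref{altd}), that is, $\cz(a)\cz(b)\leqslant p$. Now $\cz(a),\cz(b)\in\z$, so $z$-primeness of $p$ yields $\cz(a)\leqslant p$ or $\cz(b)\leqslant p$, and since $a\leqslant\cz(a)$ and $b\leqslant\cz(b)$ we conclude $a\leqslant p$ or $b\leqslant p$. Thus $p\in\spec$.

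For the converse, suppose $p\in\z$ and $p\in\spec$, and let $a$, $b\in\z$ with $ab\leqslant p$. Since $p$ is prime, $a\leqslant p$ or $b\leqslant p$ directly; the $z$-element hypothesis on $a$ and $b$ is not even needed here, only the primeness of $p$. Hence $p$ is $z$-prime (it is proper because every prime element is proper by definition). This half is genuinely trivial, exactly as in Proposition~\ref{eqsi}.

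I do not anticipate a serious obstacle: the only subtlety is making sure the $szi$ hypothesis is invoked correctly, namely that the step $ab\leqslant p\Rightarrow\cz(a)\cz(b)\leqslant p$ requires $\cz(a)\cz(b)$ to be a $z$-element, which is exactly what Theorem~\ref{prdz} (equivalently Proposition~\ref{lclk}(\ref{clijk})) provides under the $szi$ assumption. Without $szi$, the product of two $z$-elements need not be a $z$-element, so $\cz(ab)$ need not equal $\cz(a)\cz(b)$, and the argument breaks; this is presumably why the additional hypothesis appears in the statement. It is worth noting in the write-up that, in contrast to the converse, the forward direction is where all the work lies, and that the pattern here is the $z$-prime analogue of the clean reduction used for strongly irreducible elements in Proposition~\ref{eqsi}, with meet replaced by product.
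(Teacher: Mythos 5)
Your proof is correct and follows essentially the same route as the paper: the hard direction is handled by passing from $ab\leqslant p$ to $\cz(a)\cz(b)=\cz(ab)\leqslant\cz(p)=p$ via Proposition~\ref{lclk}(\ref{clijk}) and then applying $z$-primeness to the $z$-elements $\cz(a)$ and $\cz(b)$, while the other direction is immediate. Your added remarks on exactly where the $szi$ hypothesis is needed are accurate but not a departure from the paper's argument.
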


\begin{proof}
It is evident that if $p\in \z$  as well as $p\in \spec$, then  $p$ is a $z$-prime element.
For the converse, suppose $p\in \specz$  and $xy\leqslant p$, for some $x$, $y\in L$.  Then from Lemma \ref{lclk}(\ref{clijk}), it follows that
\[\cz(x) \cz(y) \leqslant \cz(xy) \leqslant \cz(p))=p.\]
Since $p\in \specz$, we  have  $x\leqslant  \cz(x)\leqslant p$ or $y \leqslant\cz(y)\leqslant p.$ This implies $p\in \spec$.
\end{proof}
\begin{proposition}
\label{kmkp}
Every non-trivial multiplicative lattice $L$ contains a minimal $z$-prime element.
\end{proposition}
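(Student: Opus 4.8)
The plan is to run Zorn's lemma on the poset $\bigl(\specz,\leqslant\bigr)$ of $z$-prime elements of $L$ and extract a minimal member. Two points need checking: that $\specz$ is non-empty, and that every non-empty chain in $\specz$ has a lower bound that again lies in $\specz$. For non-emptiness: since $L$ is non-trivial we have $0\neq 1$, so $0$ is a proper element and, $1$ being compact, Lemma \ref{flm}(\ref{exml}) yields some $m\in\mx$ with $0\leqslant m$ (in particular $\mx\neq\emptyset$). By Lemma \ref{flm}(\ref{mxip}), $m$ is prime, and by Lemma \ref{epzi}(\ref{meze}) it is a $z$-element; being a proper prime element, for any $a,b\in\z$ with $ab\leqslant m$ we get $a\leqslant m$ or $b\leqslant m$, so $m\in\specz$.

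Next I would show that the meet of a non-empty chain of $z$-prime elements is $z$-prime. Let $\{p_\lambda\}_{\lambda\in\Lambda}$ be such a chain and set $p=\bigwedge_{\lambda\in\Lambda}p_\lambda$. Then $p\in\z$ by Lemma \ref{epzi}(\ref{ijam}), and $p$ is proper, for $p=1$ would force every $p_\lambda=1$, contrary to each $p_\lambda$ being proper. To see $p$ is $z$-prime, take $a,b\in\z$ with $ab\leqslant p$ and suppose for contradiction that $a\nleqslant p$ and $b\nleqslant p$. Then $a\nleqslant p_{\lambda_1}$ and $b\nleqslant p_{\lambda_2}$ for some $\lambda_1,\lambda_2\in\Lambda$; as the chain is totally ordered, let $q$ be whichever of $p_{\lambda_1},p_{\lambda_2}$ is the smaller. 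Since being below $q$ implies being below the larger one, both $a\nleqslant q$ and $b\nleqslant q$ hold; but $q\in\specz$, $a,b\in\z$, and $ab\leqslant p\leqslant q$, so $a\leqslant q$ or $b\leqslant q$ --- a contradiction. Hence $a\leqslant p$ or $b\leqslant p$, so $p\in\specz$, and it is a lower bound for the chain.

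Both hypotheses of Zorn's lemma now being met, $\specz$ has a minimal element, which is the desired minimal $z$-prime element of $L$. The one genuinely non-routine step is the chain argument: one must upgrade ``$a$ escapes some $p_{\lambda_1}$ and $b$ escapes some $p_{\lambda_2}$'' to ``$a$ and $b$ both escape a single member of the chain'', which works precisely because the relation $x\leqslant(-)$ propagates upward along the chain, so the smaller of $p_{\lambda_1},p_{\lambda_2}$ serves both purposes; everything else reduces to elementary facts about $z$-elements already recorded in Lemma \ref{epzi} and Lemma \ref{flm}.
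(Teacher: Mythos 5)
Your proof is correct and takes essentially the same route as the paper: non-emptiness of $\specz$ via a maximal element (Lemma \ref{flm} together with Lemma \ref{epzi}(\ref{meze})), followed by Zorn's lemma applied to $(\specz,\leqslant)$. The only difference is that you spell out the chain verification---that the meet of a chain of $z$-prime elements is again $z$-prime---which the paper dismisses as ``a routine application of Zorn's lemma''; your argument for that step (passing to the smaller of the two witnesses $p_{\lambda_1},p_{\lambda_2}$) is sound.
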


\begin{proof}
The existence of minimal prime elements is guaranteed by the supposition of that finite products of compact elements are compact (see \cite{AAJ95}).
Suppose   $L$ is a non-trivial multiplicative lattice. Then by  Lemma \ref{flm}(\ref{exml}), it follows that $L$ has a maximal element, say $m$, which by Lemma \ref{flm}(\ref{mxip}) and Lemma  \ref{epzi}(\ref{meze}) is a prime element as well as a $z$-element. Hence, by definition, $m$ is a $z$-prime element, implying that the set $\specz$ is nonempty. The claim now follows from a routine application of Zorn's lemma.	
\end{proof}

For  $z$-Noetherian multiplicative lattices, the following proposition provides information regarding the cardinality of minimal $z$-prime elements. This result extends \cite[Proposition 6.7]{Les15}.

\begin{proposition}
\label{wnknp} 
If $L$ is a $z$-Noetherian multiplicative lattice, then the set of minimal $z$-prime elements of  $L$ is finite.
\end{proposition}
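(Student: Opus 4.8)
The standard approach to showing that a Noetherian-type structure has only finitely many minimal primes is by Noetherian induction on the collection of "bad" elements, combined with the fact that if $ab \leqslant p$ for a prime-type element $p$ then one of $a, b$ lies below $p$. The plan is to adapt this to the $z$-setting. Consider the set
\[
\mathcal{G} = \left\{ b \in \z \mid \text{the set of minimal } z\text{-prime elements above } b \text{ is infinite} \right\},
\]
and aim to show $\mathcal{G} = \emptyset$; applying this to $b = 0$ (which is a $z$-element when, e.g., $\jr = 0$ — more carefully, one should phrase the conclusion in terms of minimal $z$-prime elements of $L$, which by Proposition \ref{kmkp} are exactly the minimal $z$-primes above the smallest $z$-element $\cz(0)$) gives the claim. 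I would first note that by $z$-Noetherianity, if $\mathcal{G}$ were nonempty it would have a maximal element $a$. Such an $a$ cannot itself be $z$-prime, since a $z$-prime element $p$ is the unique minimal $z$-prime above itself.

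The key step is then to find two $z$-elements $b, b'$ with $a < b$, $a < b'$, and $bb' \leqslant a$ (or $b \wedge b' \leqslant a$, using that $z$-strongly irreducible behaviour is what we want to negate). Since $a$ is not $z$-prime, there exist $x, y \in \z$ with $xy \leqslant a$ but $x \nleqslant a$ and $y \nleqslant a$. Set $b = a \vee x$ and $b' = a \vee y$; these are strictly above $a$, but a subtlety arises: joins of $z$-elements need not be $z$-elements in general. To get around this I would instead work with $b = \cz(a \vee x)$ and $b' = \cz(a \vee y)$, which are genuine $z$-elements strictly above $a$ by Proposition \ref{lclk}(\ref{iclk}). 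One then checks $bb' \leqslant a$: expanding $(a\vee x)(a\vee y) = aa \vee ay \vee xa \vee xy \leqslant a$ (using Lemma \ref{bip}(\ref{pxyx}) on the first three summands and $xy \leqslant a$ for the last), so $(a\vee x)(a\vee y) \leqslant a$, and then since $a \in \z$ and $\cz$ is monotone with $\cz(a) = a$, applying $\cz$ appropriately gives $\cz(a\vee x)\,\cz(a\vee y) \leqslant \cz(a) = a$ — here one uses Proposition \ref{lclk}(\ref{clijk}) or (\ref{tccl}) to handle the product of closures, which may require the $szi$ hypothesis or can be circumvented via the meet version.

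Now comes the combinatorial heart. Every minimal $z$-prime element $p$ above $a$ satisfies $bb' \leqslant a \leqslant p$, hence (as $p$ is $z$-prime and $b, b' \in \z$) $b \leqslant p$ or $b' \leqslant p$; thus $p$ is a minimal $z$-prime above $b$ or above $b'$. Conversely any minimal $z$-prime above $b$ (resp. $b'$) that happens to lie above $a$ contributes; one argues that the minimal $z$-primes above $a$ are exactly those among the minimal $z$-primes above $b$ together with those above $b'$ that are minimal in the whole family above $a$ — in particular the set of minimal $z$-primes above $a$ is covered by (minimal $z$-primes above $b$) $\cup$ (minimal $z$-primes above $b'$). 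Since $a < b$ and $a < b'$ and $a$ is maximal in $\mathcal{G}$, neither $b$ nor $b'$ is in $\mathcal{G}$, so each of those two sets is finite, hence their union is finite, contradicting $a \in \mathcal{G}$. Therefore $\mathcal{G} = \emptyset$.

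The main obstacle I anticipate is the bookkeeping around "minimal $z$-prime above $a$ versus minimal $z$-prime above $b$": a $z$-prime minimal above $b$ need not be minimal above $a$ (there could be a smaller $z$-prime above $a$ but not above $b$), so one needs the precise statement that each minimal $z$-prime above $a$ \emph{is} minimal above $b$ or above $b'$ — this follows because if $p$ is minimal above $a$ and $b \leqslant p$, then any $z$-prime $q$ with $a \leqslant b \leqslant q \leqslant p$ forces $q = p$ by minimality of $p$ above $a$, so $p$ is minimal above $b$. That resolves it, but it is the step most likely to be glossed over. A secondary technical point is ensuring the product $bb' \leqslant a$ argument does not secretly need the $szi$ hypothesis; using $b \wedge b' \leqslant a$ together with the fact that $z$-strongly irreducible is the relevant failure mode, and Proposition \ref{lclk}(\ref{tccl}) (which holds with no extra hypothesis), should keep the argument hypothesis-free, matching the statement of the proposition.
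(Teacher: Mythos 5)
Your argument is correct, but it takes a genuinely different route from the paper. The paper's proof is topological: it observes that $z$-Noetherianity makes the space $\specz$ (with its Zariski-type topology) a Noetherian topological space, hence one with finitely many irreducible components, and then identifies those components with the closed sets $\mathcal{V}_z(p)$ for $p$ a minimal $z$-prime. Your proof is the classical ring-theoretic Noetherian induction transplanted to $\z$, and it does go through: a maximal element $a$ of your set $\mathcal{G}$ must be proper and not $z$-prime, the elements $b=\cz(a\vee x)$ and $b'=\cz(a\vee y)$ are $z$-elements strictly above $a$, and the product step needs no $szi$ hypothesis because
\[
bb'\leqslant b\wedge b'=\cz(a\vee x)\wedge\cz(a\vee y)=\cz\bigl((a\vee x)(a\vee y)\bigr)\leqslant\cz(a)=a
\]
by Lemma \ref{bip}(\ref{mul}) together with Proposition \ref{lclk}(\ref{tccl}), (\ref{ijcl}) and (\ref{altd}); the bookkeeping point you flag (a minimal $z$-prime above $a$ that lies above $b$ is automatically minimal above $b$, since any $z$-prime squeezed between $b$ and $p$ is also between $a$ and $p$) is exactly the step that must not be glossed over, and you resolve it correctly, as you do the reduction to the bottom $z$-element $\cz(0)$ (note that $0$ itself need not be a $z$-element). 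What your approach buys is self-containedness: the paper's proof silently assumes that every irreducible closed subset of $\specz$ has a generic point which is a \emph{minimal} $z$-prime, and that the ascending chain condition on $\z$ transfers to the descending chain condition on closed subsets of $\specz$; neither fact is established anywhere in the paper. What the paper's approach buys is brevity and the topological dictionary (irreducible components correspond to minimal $z$-primes), which connects to the concluding remarks on hull-kernel topologies. If you write your version up, state the covering claim as an explicit inclusion of the set of minimal $z$-primes above $a$ into the union of the two finite sets, so the contradiction with $a\in\mathcal{G}$ is immediate.
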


\begin{proof}
If $L$ is $z$-Noetherian, then the topological space $\specz$ (endowed with Zariski topology)  is also Noetherian, and thus $\specz$ has  finitely many irreducible
components. Now every irreducible closed subset of $\specz$
is of the form \[\mathcal{V}_z(p)=\left\{q\in \specz \mid p\leqslant q\right\},\] where $p$ is a minimal $z$-prime element. Thus $\mathcal{V}_z(p)$ is irreducible component if and only if $p$ is a minimal $z$-prime element. Hence, the number of minimal $z$-prime elements of $L$ is finite.
\end{proof}

We now focus in introducing another class of prime-type elements of multiplicative lattices. A proper $z$-element $p$ of a multiplicative lattice $L$ is called \emph{$z$-semiprime} if for all $a\in \z$ with $a^2\leqslant p$ implies $a\leqslant p$. 
Similarly to $z$-prime elements, we also establish an equivalent formulation (\ref{kxkx}) for $z$-semiprime elements under the same additional assumption.

\begin{proposition}\label{eqpp}
An element $q$ of a $szi$-multiplicative lattice  $L$ is $z$-semiprime if and only if $q$ is a $z$-element and also a semiprime element of $L$. 
\end{proposition}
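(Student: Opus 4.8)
The plan is to mimic the proof of Proposition \ref{eqp}, replacing the product hypothesis by a square hypothesis. The forward direction is immediate: if $q$ is a $z$-element and a semiprime element, then for any $a\in \z$ with $a^2\leqslant q$, the semiprime property (applied to $a\in L$) gives $a\leqslant q$, so $q$ is $z$-semiprime by definition.

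For the converse, suppose $q$ is $z$-semiprime; we must show $q$ is semiprime. First note that a $z$-semiprime element is by definition a proper $z$-element, so $q\in \z$ is automatic and only the semiprime property needs checking. Take any $x\in L$ with $x^2\leqslant q$. Since $L$ is a $szi$-multiplicative lattice, Proposition \ref{lclk}(\ref{clijk}) (or \ref{lclk}(\ref{clan}) with $n=2$, i.e. $\cz(x^2)=\cz(x)$) applies, giving
\[
\cz(x)^2 = \cz(x)\cz(x) = \cz(x^2) \leqslant \cz(q) = q,
\]
where the last equality uses that $q\in \z$ together with Proposition \ref{lclk}(\ref{altd}). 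Now $\cz(x)\in \z$ by Proposition \ref{lclk}(\ref{iclk}), so the $z$-semiprime hypothesis applied to $\cz(x)$ yields $\cz(x)\leqslant q$. Finally $x\leqslant \cz(x)\leqslant q$ by Proposition \ref{lclk}(\ref{iclk}), which shows $q$ is a semiprime element of $L$.

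I do not anticipate a genuine obstacle here; the argument is a routine transcription of the $z$-prime case. The one small point worth getting right is which part of Proposition \ref{lclk} to cite for $\cz(x)\cz(x)=\cz(x^2)$: either $\cz$ being multiplicative on products in a $szi$-lattice (part (\ref{clijk})), or directly the identity $\cz(a^n)=\cz(a)$ combined with $\cz(x^2)=\cz(x)\cz(x)$; both are available and the $szi$ hypothesis is exactly what makes $\cz(x)\cz(x)$ a $z$-element so that these identities hold. Since the statement of the proposition is so close to Proposition \ref{eqpp}'s neighbour Proposition \ref{eqp}, I would keep the written proof short, perhaps even just saying "the proof is entirely analogous to that of Proposition \ref{eqp}, using Proposition \ref{lclk}(\ref{clan}) in place of Proposition \ref{lclk}(\ref{clijk})."
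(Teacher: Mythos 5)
Your proof is correct and follows essentially the same route as the paper's: apply $\cz$ to $x^2\leqslant q$, use Proposition \ref{lclk}(\ref{clijk}) together with monotonicity and $\cz(q)=q$ to get $\cz(x)\cz(x)\leqslant q$, invoke the $z$-semiprime hypothesis on $\cz(x)\in\z$, and finish with $x\leqslant\cz(x)$. (Your parenthetical alternative via Proposition \ref{lclk}(\ref{clan}) would in fact give $\cz(x)=\cz(x^2)\leqslant q$ directly, but your main argument is the one the paper uses.)
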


\begin{proof}
If $q\in \z$  and also a semiprime element of  $L$, then obviously  $q$ is $z$-semiprime. For the converse, let us suppose that $q$ is a $z$-semiprime element and $a^2\leqslant q$ for some $a\in L.$ Then
\[(\cz(a))(\cz(a))\leqslant\cz(a^2)\leqslant \cz(q)=q,\]
where the first and second inclusions respectively follow from Lemma \ref{lclk}(\ref{clijk}) and Lemma \ref{lclk}(\ref{ijcl}), whereas the last equality follows from the fact that $q\in \z$ and Lemma \ref{lclk}(\ref{altd}). Since $q$ is $z$-semiprime, $\cz(a)\leqslant q$, and by  Lemma \ref{lclk}(\ref{iclk}), we have $a\leqslant q.$ 
\end{proof}

We shall now see a relation between prime-type and irreducible-type $z$-elements, and the following results extends \cite[Proposition 7.36]{Gol99}.

\begin{proposition}\label{vpss}
A $z$-element of a $szi$-multiplicative lattice $L$ is $z$-prime if and only if it is $z$-semiprime and $z$-strongly irreducible. 
\end{proposition}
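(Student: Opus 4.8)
The plan is to prove both implications directly from the definitions, using Proposition \ref{eqp} to convert ``$z$-prime'' into ``$z$-element $+$ prime'' whenever that is convenient. The forward direction is the trivial one: if $p$ is $z$-prime, then it is $z$-strongly irreducible by the remark immediately following the definition of $z$-prime (which cites Lemma \ref{bip}(\ref{mul})), and it is $z$-semiprime because $a^2 \leqslant p$ with $a \in \z$ is a special case of the $z$-prime condition with $b = a$. So the content is all in the converse.

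For the converse, assume $p \in \z$ is both $z$-semiprime and $z$-strongly irreducible, and suppose $ab \leqslant p$ for some $a, b \in \z$; I want $a \leqslant p$ or $b \leqslant p$. The natural move is to pass to the meet: since $ab \leqslant a \wedge b$ is false in general, but $ab \leqslant a$ and $ab \leqslant b$ hold by Lemma \ref{bip}(\ref{pxyx}), I should instead consider the element $a \wedge b$ and compare its square to $p$. The key computation is $(a \wedge b)^2 \leqslant ab$: indeed $a \wedge b \leqslant a$ and $a \wedge b \leqslant b$, so by Lemma \ref{bip}(\ref{monj}) we get $(a\wedge b)(a \wedge b) \leqslant ab \leqslant p$. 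Now $a \wedge b \in \z$ by Lemma \ref{epzi}(\ref{ijam}), so $z$-semipri\-me\-ness of $p$ gives $a \wedge b \leqslant p$. Finally, since $a, b, p \in \z$ and $p$ is $z$-strongly irreducible, the relation $a \wedge b \leqslant p$ forces $a \leqslant p$ or $b \leqslant p$, which is exactly what we needed.

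I do not expect any real obstacle here: every step is an appeal to an already-established lemma, and the only mildly nonobvious point is realizing that one should test $z$-semiprimeness on $a \wedge b$ rather than trying to relate $ab$ and $a \wedge b$ directly — the inequality $(a\wedge b)^2 \leqslant ab$ is what bridges the multiplicative hypothesis ($ab \leqslant p$) and the order-theoretic conclusion ($z$-strong irreducibility, phrased via $\wedge$). One should double-check that the $szi$ hypothesis is not actually needed for this argument; it appears it is not, since we never invoke $\cz(ab) = \cz(a)\cz(b)$ or idempotency of basic $z$-elements — but I would keep the hypothesis in the statement for uniformity with Propositions \ref{eqp} and \ref{eqpp}, or remark that it can be dropped. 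If one prefers, the converse can alternatively be phrased: $z$-strongly irreducible $+ z$ gives strongly irreducible $+ z$ (Proposition \ref{eqsi}), $z$-semiprime $+ z$ gives semiprime $+ z$ (Proposition \ref{eqpp}), and then one invokes the classical fact that semiprime $+$ strongly irreducible $=$ prime, followed by Proposition \ref{eqp} to return to $z$-prime; but the direct argument above is shorter and self-contained.
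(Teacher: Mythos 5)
Your proof is correct. The converse direction — the substantive half — is word-for-word the paper's argument: from $ab\leqslant p$ you pass to $(a\wedge b)^2\leqslant ab\leqslant p$, invoke $z$-semiprimeness of $p$ on $a\wedge b\in\z$ (closure of $\z$ under meets, Lemma \ref{epzi}(\ref{ijam})), and finish with $z$-strong irreducibility. Where you genuinely diverge is the forward direction. The paper routes it through Proposition \ref{eqp} (to get $p\in\z$ and $p\in\spec$) and then Proposition \ref{eqpp} (to get $z$-semiprime), both of which lean on the $szi$ hypothesis via $\cz(ab)=\cz(a)\cz(b)$. You instead argue entirely inside the definitions: $z$-prime gives $z$-semiprime by specializing $b=a$, and gives $z$-strongly irreducible via $ab\leqslant a\wedge b$ (Lemma \ref{bip}(\ref{mul})), exactly as the paper's own remark after the definition of $z$-prime already observes. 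Your version is more elementary, and your side observation is right: with this forward direction neither half of the equivalence uses the $szi$ hypothesis, so the proposition holds in any multiplicative lattice satisfying the paper's standing assumptions — a small but real sharpening of the statement as printed. The alternative route you sketch at the end (converting everything to the non-$z$ notions via Propositions \ref{eqsi}, \ref{eqpp}, \ref{eqp} and using semiprime $+$ strongly irreducible $=$ prime) would reintroduce the $szi$ dependence, so the direct argument is indeed the better of the two.
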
 

\begin{proof}
Let $p\in \specz$. Then by Proposition \ref{eqp}, $p\in \z$  and $p\in \spec$. This implies that $p$ is $z$-semiprime by Proposition \ref{eqpp}. From Lemma \ref{bip}(\ref{mul}) and Proposition \ref{eqsi}, it follows that $p\in \irs(L)$. Conversely, let $p$ be a $z$-semiprime as well as a $z$-strongly irreducible element. Let  $a$, $b\in \z $ satisfying $ab\leqslant p$. Then 
\[(a\wedge b)^2\leqslant a b\leqslant p.\]
Since $p$ is $z$-semiprime, this implies $a\wedge b\leqslant p$. But $p$ is also $z$-strongly irreducible, and so $a\leqslant p$ or $b\leqslant p.$
\end{proof}

To introduce our final class of $z$-$\star$ element, we recall from \cite{Dil62} that an element $r$ of a multiplicative lattice $L$ is said to be \emph{primary} if for $x$, $y\in L$ with $xy\leqslant r$ implies $x\leqslant r$ or $y^n\leqslant r$ for some $n\in \mathds{N}^+$. If in this definition, $r$, $x$, $y\in \z$, then we say $r$ is a \emph{$z$-primary} element of $L$. Like $z$-prime elements, the equivalence formulation (\ref{kxkx}) also holds for $z$-primary elements of $szi$-multiplicative lattices, and proof of that is similar to Proposition \ref{eqp}.

\begin{proposition}\label{zprim}
In a $szi$-multiplicative lattice $L$, an  $r\in L$  is $z$-primary if and only if $r\in \z$  and $r$ is a primary element.
\end{proposition}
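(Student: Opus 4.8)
The plan is to mirror the proof of Proposition \ref{eqp} essentially verbatim, since $z$-primary is to primary exactly as $z$-prime is to prime, and the only multiplicative input needed is the identity $\cz(xy)=\cz(x)\cz(y)$ available in a $szi$-multiplicative lattice (Proposition \ref{lclk}(\ref{clijk})). One direction is immediate: if $r\in\z$ and $r$ is primary, then restricting the defining implication of a primary element to $x,y\in\z$ shows at once that $r$ is $z$-primary. So the content is the converse.

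For the converse, assume $r$ is $z$-primary (in particular $r\in\z$ by definition) and let $x,y\in L$ with $xy\leqslant r$. Applying the $szi$-hypothesis through Proposition \ref{lclk}(\ref{clijk}) and monotonicity (Proposition \ref{lclk}(\ref{ijcl})), together with $\cz(r)=r$ from Proposition \ref{lclk}(\ref{altd}), I get
\[
\cz(x)\,\cz(y)=\cz(xy)\leqslant\cz(r)=r .
\]
Now $\cz(x),\cz(y)\in\z$, and $r$ is $z$-primary, so either $\cz(x)\leqslant r$ or $(\cz(y))^{n}\leqslant r$ for some $n\in\mathds{N}^{+}$. In the first case $x\leqslant\cz(x)\leqslant r$ by Proposition \ref{lclk}(\ref{iclk}). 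In the second case, since $y\leqslant\cz(y)$ gives $y^{n}\leqslant(\cz(y))^{n}\leqslant r$ by Lemma \ref{bip}(\ref{monj}) applied repeatedly (or, equivalently, by the argument used for Proposition \ref{lclk}(\ref{clan})), we obtain $y^{n}\leqslant r$. Hence $r$ is primary, which completes the proof.

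I do not anticipate a genuine obstacle here; the one point that deserves a line of care is passing from $(\cz(y))^{n}\leqslant r$ to $y^{n}\leqslant r$, which is handled purely by monotonicity of multiplication (Lemma \ref{bip}(\ref{monj})) since $y\leqslant\cz(y)$. Everything else is a direct transcription of the $z$-prime argument, and the paper itself flags that ``proof of that is similar to Proposition \ref{eqp}.''
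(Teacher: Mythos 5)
Your proof is correct and is precisely the argument the paper intends, since the paper omits the proof with the remark that it is ``similar to Proposition \ref{eqp}''; you carry out that transcription faithfully, using $\cz(xy)=\cz(x)\cz(y)$ from Proposition \ref{lclk}(\ref{clijk}) together with monotonicity and $\cz(r)=r$. The one genuinely new step --- passing from $(\cz(y))^{n}\leqslant r$ to $y^{n}\leqslant r$ via $y\leqslant\cz(y)$ and Lemma \ref{bip}(\ref{monj}) --- is exactly the right way to handle the power in the primary condition.
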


Thanks to Proposition \ref{zprim}, many of the results (like primary decomposition) relating  primary elements in \cite{Dil62} carry over directly to $z$-primary elements.

\begin{cremark}
Here are some questions on $z$-elements of multiplicative lattices that may be worth exploring.
\begin{enumerate}
\item[$\bullet$] In Lemma \ref{epzi}(\ref{ziz}), we observed that $0\in \z$, whenever $L$ is semisimple. It would be interesting to know whether the converse is also true.

\item[$\bullet$] A sufficient condition for the identity $\cz(a) = \mathsf{m}_a$ to hold is provided in Proposition \ref{lclk}(\ref{clvm}). We are not aware of any necessary and sufficient condition for this.

\item[$\bullet$] Although Proposition \ref{cujo} provides some equivalent conditions on the closedness of $\cz$ under joins, it does not provide a characterization for multiplicative lattices to possess this property for $\cz$. This may be worth investigating.

\item[$\bullet$] In this paper, we have proved the equivalence formulation (\ref{kxkx}) for $z$-prime, $z$-semiprime, and $z$-primary elements under the assumption that $L$ is a $szi$-multiplicative lattice. Whether this assumption is necessary remains unknown to us.

\item[$\bullet$] It could be interesting to know for which other distinguished classes of elements  the equivalence formulation (\ref{kxkx}) holds, potentially with suitable assumptions on the underlying lattice.
	
\item[$\bullet$] The notion of $z$-ideals has been extended to arbitrary noncommutative rings in \cite{MJ20}. A similar extension for $z$-elements is worth it to study for arbitrary quantales. 

\item[$\bullet$] The notion of higher order $z$-ideals of (commutative) rings has been introduced and studied in \cite{DI16}. It is easy to see that the definition of a $z$-element can also be extended to higher order $z$-elements. Therefore, studying these higher-order $z$-elements of multiplicative lattices would be of interest.

\item[$\bullet$] In \cite{DG23}, it has been  discussed that the class of strongly irreducible ideals is the largest class on which a hull-kernel topology can be imposed. Considering $z$-strongly irreducible elements as the corresponding class of elements, it could be intriguing to explore the topological properties of this class and its subspaces, such as $z$-prime, $z$-semiprime, $z$-minimal prime, maximal elements, \textit{etc}.
\end{enumerate}
\end{cremark}


\bibliographystyle{smfalpha}

\end{document}